\begin{document}

\newtheorem{theorem}{Theorem}
\newtheorem{lemma}[theorem]{Lemma}
\newtheorem{claim}[theorem]{Claim}
\newtheorem{cor}[theorem]{Corollary}
\newtheorem{conj}[theorem]{Conjecture}
\newtheorem{prop}[theorem]{Proposition}
\newtheorem{definition}[theorem]{Definition}
\newtheorem{question}[theorem]{Question}
\newtheorem{example}[theorem]{Example}
\newcommand{\hh}{{{\mathrm h}}}
\newtheorem{remark}[theorem]{Remark}

\numberwithin{equation}{section}
\numberwithin{theorem}{section}
\numberwithin{table}{section}
\numberwithin{figure}{section}

\def\sssum{\mathop{\sum\!\sum\!\sum}}
\def\ssum{\mathop{\sum\ldots \sum}}
\def\iint{\mathop{\int\ldots \int}}

\newcommand{\diam}{\operatorname{diam}}

\def\squareforqed{\hbox{\rlap{$\sqcap$}$\sqcup$}}
\def\qed{\ifmmode\squareforqed\else{\unskip\nobreak\hfil
\penalty50\hskip1em \nobreak\hfil\squareforqed
\parfillskip=0pt\finalhyphendemerits=0\endgraf}\fi}

\newfont{\teneufm}{eufm10}
\newfont{\seveneufm}{eufm7}
\newfont{\fiveeufm}{eufm5}
%
%
\newfam\eufmfam
     \textfont\eufmfam=\teneufm
\scriptfont\eufmfam=\seveneufm
     \scriptscriptfont\eufmfam=\fiveeufm
%
%
\def\frak#1{{\fam\eufmfam\relax#1}}

\newcommand{\bflambda}{{\boldsymbol{\lambda}}}
\newcommand{\bfmu}{{\boldsymbol{\mu}}}
\newcommand{\bfxi}{{\boldsymbol{\eta}}}
\newcommand{\bfrho}{{\boldsymbol{\rho}}}

\def\eps{\varepsilon}

\def\fK{\mathfrak K}
\def\fT{\mathfrak{T}}
\def\fL{\mathfrak L}
\def\fR{\mathfrak R}

\def\fA{{\mathfrak A}}
\def\fB{{\mathfrak B}}
\def\fC{{\mathfrak C}}
\def\fM{{\mathfrak M}}
\def\fS{{\mathfrak  S}}
\def\fU{{\mathfrak U}}

\def\T {\mathsf {T}}
\def\Tor{\mathsf{T}_d}
\def\Tore{\widetilde{\mathrm{T}}_{d} }

\def\sM {\mathsf {M}}

\def\ss{\mathsf {s}}

\def\Kmnd{\cK_d(m,n)}
\def\Kmnp{\cK_p(m,n)}
\def\Kmnq{\cK_q(m,n)}

\def \balpha{\bm{\alpha}}
\def \bbeta{\bm{\beta}}
\def \bgamma{\bm{\gamma}}
\def \bdelta{\bm{\delta}}
\def \bzeta{\bm{\zeta}}
\def \blambda{\bm{\lambda}}
\def \bchi{\bm{\chi}}
\def \bphi{\bm{\varphi}}
\def \bpsi{\bm{\psi}}
\def \bnu{\bm{\nu}}
\def \bomega{\bm{\omega}}

\def \bell{\bm{\ell}}

\def\eqref#1{(\ref{#1})}

\def\vec#1{\mathbf{#1}}

\newcommand{\abs}[1]{\left| #1 \right|}

\def\Zq{\mathbb{Z}_q}
\def\Zqx{\mathbb{Z}_q^*}
\def\Zd{\mathbb{Z}_d}
\def\Zdx{\mathbb{Z}_d^*}
\def\Zf{\mathbb{Z}_f}
\def\Zfx{\mathbb{Z}_f^*}
\def\Zp{\mathbb{Z}_p}
\def\Zpx{\mathbb{Z}_p^*}
\def\cM{\mathcal M}
\def\cE{\mathcal E}
\def\cH{\mathcal H}

\def\le{\leqslant}

\def\ge{\geqslant}

\def\sfB{\mathsf {B}}
\def\sfC{\mathsf {C}}
\def\L{\mathsf {L}}
\def\FF{\mathsf {F}}

\def\sE {\mathscr{E}}
\def\sS {\mathscr{S}}

\def\cA{{\mathcal A}}
\def\cB{{\mathcal B}}
\def\cC{{\mathcal C}}
\def\cD{{\mathcal D}}
\def\cE{{\mathcal E}}
\def\cF{{\mathcal F}}
\def\cG{{\mathcal G}}
\def\cH{{\mathcal H}}
\def\cI{{\mathcal I}}
\def\cJ{{\mathcal J}}
\def\cK{{\mathcal K}}
\def\cL{{\mathcal L}}
\def\cM{{\mathcal M}}
\def\cN{{\mathcal N}}
\def\cO{{\mathcal O}}
\def\cP{{\mathcal P}}
\def\cQ{{\mathcal Q}}
\def\cR{{\mathcal R}}
\def\cS{{\mathcal S}}
\def\cT{{\mathcal T}}
\def\cU{{\mathcal U}}
\def\cV{{\mathcal V}}
\def\cW{{\mathcal W}}
\def\cX{{\mathcal X}}
\def\cY{{\mathcal Y}}
\def\cZ{{\mathcal Z}}
\newcommand{\rmod}[1]{\: \mbox{mod} \: #1}

\def\cg{{\mathcal g}}

\def\vy{\mathbf y}
\def\vr{\mathbf r}
\def\vx{\mathbf x}
\def\va{\mathbf a}
\def\vb{\mathbf b}
\def\vc{\mathbf c}
\def\ve{\mathbf e}
\def\vh{\mathbf h}
\def\vk{\mathbf k}
\def\vm{\mathbf m}
\def\vz{\mathbf z}
\def\vu{\mathbf u}
\def\vv{\mathbf v}

\def\e{{\mathbf{\,e}}}
\def\ep{{\mathbf{\,e}}_p}
\def\eq{{\mathbf{\,e}}_q}

\def\Tr{{\mathrm{Tr}}}
\def\Nm{{\mathrm{Nm}}}

 \def\SS{{\mathbf{S}}}

\def\lcm{{\mathrm{lcm}}}

 \def\0{{\mathbf{0}}}

\def\({\left(}
\def\){\right)}
\def\l|{\left|}
\def\r|{\right|}
\def\fl#1{\left\lfloor#1\right\rfloor}
\def\rf#1{\left\lceil#1\right\rceil}
\def\sumstar#1{\mathop{\sum\vphantom|^{\!\!*}\,}_{#1}}

\def\mand{\qquad \mbox{and} \qquad}

\def\tblue#1{\begin{color}{blue}{{#1}}\end{color}}




\hyphenation{re-pub-lished}

\mathsurround=1pt

\def\bfdefault{b}

\def \F{{\mathbb F}}
\def \K{{\mathbb K}}
\def \N{{\mathbb N}}
\def \Z{{\mathbb Z}}
\def \P{{\mathbb P}}
\def \Q{{\mathbb Q}}
\def \R{{\mathbb R}}
\def \C{{\mathbb C}}
\def\Fp{\F_p}
\def \fp{\Fp^*}

 \def \xbar{\overline x}

\title[Hybrid mean value theorems]{On a Hybrid Version of the Vinogradov Mean Value Theorem}

 \author[C. Chen] {Changhao Chen}

\address{Department of Pure Mathematics, University of New South Wales,
Sydney, NSW 2052, Australia}
\email{changhao.chenm@gmail.com}

 \author[I. E. Shparlinski] {Igor E. Shparlinski}

\address{Department of Pure Mathematics, University of New South Wales,
Sydney, NSW 2052, Australia}
\email{igor.shparlinski@unsw.edu.au}

\begin{abstract}   Given a family $\bphi = \(\varphi_1, \ldots, \varphi_d\)\in \Z[T]^d$  of $d$ distinct 
nonconstant polynomials, a positive integer $k\le d$ and a real positive parameter $\rho$,  we consider the 
mean value 
$$
M_{k, \rho} (\bphi, N) = \int_{\vx \in [0,1]^k} \sup_{\vy \in [0,1]^{d-k}}
\left| S_{\bphi}( \vx, \vy; N)  \right|^\rho d\vx
$$
of exponential sums
$$
S_{\bphi}( \vx, \vy; N) = \sum_{n=1}^{N} \exp\(2 \pi i\(\sum_{j=1}^k x_j \varphi_j(n)+ \sum_{j=1}^{d-k}y_j\varphi_{k+j}(n)\)\), 
$$
where $\vx = (x_1, \ldots, x_k)$ and $\vy =(y_1, \ldots, y_{d-k})$.
The case of polynomials $\varphi_i(T) = T^i$, $i =1, \ldots, d$ and $k=d$ 
corresponds to the classical {\it Vinaogradov mean value theorem\/}. 

Here motivated by recent works of Wooley (2015) and the authors (2019) on bounds on $\sup_{\vy \in [0,1]^{d-k}}
\left| S_{\bphi}( \vx, \vy; N)  \right|$ for almost all $\vx \in [0,1]^k$, we obtain nontrivial bounds on $M_{k, \rho} (\bphi, N)$. 
\end{abstract}

\keywords{Weyl sums, hybrid mean values, discrepancy}
\subjclass[2010]{11K38, 11L15}

\maketitle

%

\section{Introduction}

For an integer $\nu \geqslant 2$, let $\T_\nu = (\R/\Z)^\nu$ be the  $\nu$-dimensional unit torus. 
We denote 
$$
\e(x) = \exp(2\pi ix).
$$
The exponential 
sums  
$$
S_d(\vu; N)=\sum_{n=1}^N \e(u_1n+\ldots+u_d n^d), \quad \vu = (u_1, \ldots, u_d) \in \Tor, 
$$
introduced and estimated by Weyl~\cite{Weyl}, are  commonly called the~\emph{Weyl sums}.

Weyl sums  appear in a great variety of number theoretic problems starting with the problem of  uniformly of distribution 
of fractional parts of real polynomials, see also~\cite{Baker}.   They also play a crucial role in  
estimating the {\it zero-free region} of the {\it Riemann zeta-function} and thus in turn  in bounding in  the error term  in  the {\it prime number theorem}, see~\cite[Section~8.5]{IwKow},  
and  the {\it Waring problem}, see~\cite[Section~20.2]{IwKow}, in  estimating 
 short character sums modulo highly composite numbers~\cite[Section~12.6]{IwKow}.

Thanks to recent striking results   of  Bourgain, Demeter and Guth~\cite{BDG} (for $d \geqslant 4$) 
and Wooley~\cite{Wool2} (for $d=3$)  (see also~\cite{Wool5}), for  the mean value  of  $S_d(\vu; N)$ we have
\begin{equation}
\label{eq:MVT}
\int_{\Tor} |S_d(\vu; N)|^{2s(d)}d\vu \leqslant  N^{s(d)+o(1)}, \qquad N \to  \infty,
\end{equation}
where   
\begin{equation}
\label{eq:sq}
s(d)=\frac{d(d+1)}{2}, 
\end{equation}
which is the best possible  form  
of  the Vinogradov mean value theorem.

On the other hand,   for individual  sums it is known that their size depends on Diophantine properties 
of the coefficients $u_1, \ldots, u_d$, but generally the situation is not well understood, see~\cite{Brud,BD}.  
 
The following best known bound is a direct implication of~\eqref{eq:MVT} and is given in~\cite[Theorem~5]{Bourg}.   Let $\vu = (u_1, \ldots, u_d)  \in \Tor$ be such that for some $\nu$ with $2 \le \nu\le d $ and some positive integers $a$ and $q$ with $\gcd(a,q)=1$  we have
$$
\left| u_\nu - \frac{a}{q}\right| \le \frac{1}{q^2}. 
$$
Then for any $\varepsilon>0$ there exits a constant $C(\varepsilon)$ such that
$$
|S_d(\vu; N)| \le C(\varepsilon)N^{1+\varepsilon} \(q^{-1} + N^{-1} + qN^{-\nu}\)^{\frac{1}{d(d-1)}}.
$$

Recently, Wooley~\cite{Wool3} has considered a hybrid scenario which interpolates between {\it individual\/}  bounds 
and {\it mean value\/} estimates. In this settings one seeks results  
 which hold for {\it all\/} values of the components of  $\vu = (u_1, \ldots, u_d) \in \Tor$     
on some prescribed set of positions and  {\it almost all\/} values of the components on the remaining positions.

Given a family $\bphi = \(\varphi_1, \ldots, \varphi_d\)\in \Z[T]^d$  of $d$ distinct 
nonconstant polynomials and a sequence of complex  weights $\vec{a} = (a_n)_{n=1}^\infty$, for $\vu=(u_1, \ldots, u_d)\in \Tor$ we  define the 
trigonometric polynomials 
\begin{equation}
\label{eq:Tphi}
T_{\va, \bphi}( \vu; N)=\sum_{n=1}^{N}a_n \e\(u_1 \varphi_1(n)+\ldots + u_d\varphi_d(n) \).
\end{equation} 
Furthermore, for $k=1, \ldots, d$, we decompose
$$
\Tor = \T_k\times \T_{d-k}.
$$ 
Given  $\vx\in \T_k$, $\vy\in\T_{d-k}$ we refine the notation~\eqref{eq:Tphi}
and  write
\begin{equation}
\label{eq:split}
T_{\va, \bphi}( \vx, \vy; N)= \sum_{n=1}^{N}a_n \e\(\sum_{j=1}^k x_j \varphi_j(n)+ \sum_{j=1}^{d-k}y_j\varphi_{k+j}(n) \).
\end{equation}
If $\va = \ve =  (1)_{n=1}^\infty$ (that is,  $a_n =1$ for each $n\in \N$) we just write 
$$
T_{\bphi}(\vx, \vy; N) = T_{\ve, \bphi}( \vx, \vy; N).
$$

In fact  Wooley~\cite{Wool3} has studied only the classical case $a_n = 1$ for all $n\in \N$ and the polynomials
\begin{equation}
\label{eq:classical}
  \{\varphi_1(T),\ldots, \varphi_d(T)\}=\{T, \ldots, T^d\} .
\end{equation} 
(we note that the order of $\varphi_1,\ldots, \varphi_d$ is not specified in~\eqref{eq:classical}). 
Embedding  in the  argument  of Wooley~\cite[Theorem~1.1]{Wool3}  
the modern form of  the 
Vinogradov mean value theorem~\eqref{eq:MVT}, one derives   that for almost all $\vx\in \T_k $ 
with respect to the $k$-dimensional Lebesgue measure on $\T_k$, we have
\begin{equation}
\label{eq:wooley}
\sup_{\vy\in \T_{d-k}} |T_{\bphi}(\vx, \vy; N)| \le N^{(1+ \Delta_W(\bphi,k))/2 +o(1)}, \qquad N\to \infty, 
\end{equation}
where 
$$
\Delta_W(\bphi,k) =   \frac{2\sigma_k(\bphi)+d-k +1}{2s(d) +d-k+1}
$$
with  $s(d)$ given by~\eqref{eq:sq} and 
\begin{equation}
\label{eq:sigmak}
\sigma_k(\bphi)=\sum_{j=k+1}^d \deg \varphi_j . 
\end{equation}

The authors~\cite{ChSh-New} have extended and improved this and some other  results of 
Wooley~\cite{Wool3}. In particular, by~\cite[Theorems~2.1, 2.3 and Corollary~2.4]{ChSh-New},  
the bound~\eqref{eq:wooley} holds with 
$$
\Delta_{CS}(\bphi,k) = \min\left\{ \frac{2\sigma_k(\bphi)+d-k } {2s(d)  +d-k }, \frac{\sigma_k(\bphi)+1}{s(d)} \right\} < \Delta_W(\bphi,k)
$$ 
instead of $\Delta_W(\bphi,k)$ and also applies to   sums $T_{\va, \bphi}( \vx, \vy; N)$ 
with complex weights $a_n=n^{o(1)}, n\in \N$, and a large family  of polynomials.

\section{Hybrid mean value theorems}

\subsection{Notation and conventions}

Throughout the paper, the notation $U = O(V)$, 
$U \ll V$ and $ V\gg U$  are equivalent to $|U|\leqslant c V$ for some positive constant $c$, 
which throughout the paper may depend on the degree $d$ and occasionally on the small real positive 
parameter $\varepsilon$. 

For any quantity $V> 1$ we write $U = V^{o(1)}$ (as $V \to \infty$) to indicate a function of $V$ which 
satisfies $|U| \le V^\eps$ for any $\eps> 0$, provided $V$ is large enough. One additional advantage 
of using $V^{o(1)}$ is that it absorbs $\log V$ and other similar quantities without changing  the whole 
expression.

 We use $\# \cS$ to denote the cardinality of a finite set $\cS$.

We say that some property holds for almost all $\vx \in \T_k$ if it holds for a set 
 $\cX \subseteq \T_k$ of $k$-dimensional  Lebesgue measure  $\lambda(\cX) = 1$.

\subsection{Main results}

Here, instead of asking about bounds on
 $$
 \sup_{\vy \in \T_{d-k}} \left| T_{\va, \bphi}( \vx, \vy; N) \right|,
 $$
  for a real $\rho> 0 $ we consider 
the mean value
$$
M_{k,\rho} (\va, \bphi, N) = \int_{\vx \in \T_k} \sup_{\vy \in \T_{d-k}}
 \left| T_{\va, \bphi}( \vx, \vy; N) \right|^\rho d\vx. 
$$

Let 
$$
W(T;\bphi) = \det\(\varphi_i^{(j-1)}(T)\)_{i,j=1}^n
$$
denote   the Wronskian  of  $d$ polynomials $\bphi = \(\varphi_1, \ldots, \varphi_d\)\in \Z[T]^d$.

We recall that $s(d)$ and 
$\sigma_k(\bphi)$ are given by~\eqref{eq:sq} and~\eqref{eq:sigmak}, respectively.

\begin{theorem}
\label{thm:general} 
Suppose that $\bphi \in \Z[T]^d$ is such that 
  the Wronskian  $W(T;\bphi)$ 
does not vanish identically. Let   $\vec{a} = (a_n)_{n=1}^\infty$ 
be a sequence of complex  weights  with $a_n=n^{o(1)}$. 
Then for any real positive $\rho \le  2 s(d) +d -k$ we have 
$$
M_{k,\rho} (\va, \bphi, N) \le N^{  \mu(\bphi,k) \rho  +o(1)},  \qquad N\to \infty, 
$$ 
where  
$$
\mu (\bphi,k) = \frac{s(d)+\sigma_k(\bphi)+d-k } {2 s(d) +d -k}.
$$ 
\end{theorem}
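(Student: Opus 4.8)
The strategy is to split the $\rho$-th moment into a "major contribution" coming from $\vx$ where the supremum over $\vy$ is large, controlled by an $L^2$-type counting bound, and a trivial bound for the remaining range. More precisely, for a parameter $V\ge 1$ to be chosen, write
\begin{equation*}
M_{k,\rho}(\va,\bphi,N)=\int_{\cX_V}\sup_{\vy}|T_{\va,\bphi}(\vx,\vy;N)|^{\rho}\,d\vx+\int_{\T_k\setminus\cX_V}\sup_{\vy}|T_{\va,\bphi}(\vx,\vy;N)|^{\rho}\,d\vx,
\end{equation*}
where $\cX_V=\{\vx\in\T_k:\ \sup_{\vy}|T_{\va,\bphi}(\vx,\vy;N)|>V\}$. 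On the complement the integrand is at most $V^{\rho}$, giving a contribution $\le V^{\rho}$. So the whole game is to bound $\lambda(\cX_V)$, i.e. to show that the set of "bad" $\vx$ for which the hybrid sup-norm exceeds $V$ has measure at most $N^{2s(d)+d-k+o(1)}V^{-(2s(d)+d-k)}$; then optimizing $V$ (taking $V=N^{(s(d)+\sigma_k(\bphi)+d-k)/(2s(d)+d-k)+o(1)}$, which lies in the admissible range precisely because $\rho\le 2s(d)+d-k$) yields the claimed exponent $\mu(\bphi,k)\rho$.

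The measure bound on $\cX_V$ is exactly the kind of estimate underlying the almost-all results of \cite{Wool3} and \cite{ChSh-New}; I would extract it from that machinery rather than reprove it. The key inputs are: (i) a completion/continuity step replacing the genuine supremum over $\vy\in\T_{d-k}$ by a maximum over a discrete net of size $N^{O(d-k)}$, up to a factor $N^{o(1)}$, using that $|\partial_{y_j}T_{\va,\bphi}|\ll N^{1+\deg\varphi_{k+j}+o(1)}$; (ii) for each fixed net point $\vy$, bounding $\lambda(\{\vx:\ |T_{\va,\bphi}(\vx,\vy;N)|>V\})$ by Chebyshev applied to $\int_{\T_k}|T_{\va,\bphi}(\vx,\vy;N)|^{2\tau}\,d\vx$ for a suitable integer $\tau$; (iii) interpreting this $2\tau$-th moment as a count of solutions to the system $\sum_{\ell}\bigl(\varphi_j(n_\ell)-\varphi_j(m_\ell)\bigr)=0$, $1\le j\le k$, with $1\le n_\ell,m_\ell\le N$, and estimating it via the Vinogradov mean value bound \eqref{eq:MVT}. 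The Wronskian hypothesis enters at step (iii): since $W(T;\bphi)\not\equiv 0$, the polynomials $\varphi_1,\dots,\varphi_d$ are linearly independent, so after a linear change of variables the associated system of $k$ equations dominates (in the sense of the Newton polytope / degree data) the first $k$ equations of the classical Vinogradov system, which lets \eqref{eq:MVT} be applied with $s=s(d)$; the $\sigma_k(\bphi)$ term is precisely the cost of the $d-k$ "free" variables $\vy$ ranging over intervals of length determined by $\deg\varphi_{k+j}$.

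Assembling: one gets $\lambda(\cX_V)\ll N^{2s(d)+d-k+o(1)}V^{-(2s(d)+d-k)}$, hence
\begin{equation*}
\int_{\cX_V}\sup_{\vy}|T_{\va,\bphi}(\vx,\vy;N)|^{\rho}\,d\vx\le N^{O(1)}\lambda(\cX_V)\ll N^{2s(d)+d-k+O(1)}V^{-(2s(d)+d-k)},
\end{equation*}
which when combined with the trivial term $V^{\rho}$ and balanced against it produces the stated bound, the condition $\rho\le 2s(d)+d-k$ being exactly what makes the balancing value of $V$ an admissible (i.e.\ $\le N$) and the dyadic-summation-over-$V$ bookkeeping legitimate. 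I expect the main obstacle to be step (iii): making precise the claim that, under the non-vanishing Wronskian assumption, the $k$-variable moment is controlled by the full Vinogradov mean value with exponent $s(d)$ with only the extra additive cost $\sigma_k(\bphi)$ — this requires care with the linear algebra relating $(\varphi_1,\dots,\varphi_k)$ to monomials, the interplay of the degrees, and tracking the $N^{o(1)}$ factors through the completion step. The dyadic decomposition over $V=2^j$ and the optimization are routine once (iii) is in hand.
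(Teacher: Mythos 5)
Your overall architecture matches the paper's: the proof there combines a level-set measure estimate for $\lambda(\{\vx\in\T_k:\ \sup_{\vy}|T_{\va,\bphi}(\vx,\vy;N)|\ge T\})$, namely Lemma~\ref{lem:Markov} (imported from~\cite[Corollary~3.8]{ChSh-New}, exactly the machinery you propose to quote rather than reprove), with a generic layer-cake integration step (Lemma~\ref{lem:level-set}). However, two steps as you have written them do not work. First, the measure bound you assert, $\lambda(\cX_V)\ll N^{2s(d)+d-k+o(1)}V^{-(2s(d)+d-k)}$, is trivially true and useless: for $V\le N$ it exceeds $1=\lambda(\T_k)$. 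The correct input is $\lambda(\cX_V)\le N^{s(d)+\sigma_k(\bphi)+d-k+o(1)}V^{-(2s(d)+d-k)}$, i.e.\ the numerator exponent is $a=s(d)+\sigma_k(\bphi)+d-k$, not $b=2s(d)+d-k$; the exponent $\mu(\bphi,k)=a/b$ is exactly the quotient of these two, so getting $a$ right is the whole point. Your choice of the optimizing $V=N^{a/b}$ is consistent with the correct bound, so this may be a slip, but it is repeated verbatim in your ``Assembling'' paragraph.

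Second, and more substantively, the two-piece split with the crude estimate $\int_{\cX_V}\sup_{\vy}|T_{\va,\bphi}|^{\rho}\,d\vx\le N^{O(1)}\lambda(\cX_V)$ cannot deliver the claimed exponent. On $\cX_V$ the integrand can be as large as $N^{\rho+o(1)}$, so this route gives at best $N^{\rho+a+o(1)}V^{-b}$, and balancing against the trivial term $V^{\rho}$ yields $N^{\rho(\rho+a)/(\rho+b)+o(1)}$, which is strictly worse than $N^{\rho a/b+o(1)}$ whenever $a<b$ --- that is, whenever $\sigma_k(\bphi)<s(d)$, the only case in which the theorem is nontrivial. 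One genuinely needs the dyadic (layer-cake) decomposition over levels $2^{\ell}N^{a/b}$, applying the measure bound at every level and using the hypothesis $\rho\le b=2s(d)+d-k$ to control the geometric series $\sum_{\ell}2^{\ell(\rho-b)}$; this is precisely what the paper's Lemma~\ref{lem:level-set} does. You allude to ``dyadic summation over $V$'' in your closing sentence, so the idea is present, but the displayed computation is the lossy version and must be replaced by the layer-cake argument for the proof to close.
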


Note that Theorem~\ref{thm:general} gives a non-trivial bound, that is, $\mu(\bphi, k)<1$, provided that
 $\sigma_k(\bphi)<s(d)$. Moreover for $k=d$, we have $\sigma_d(\bphi)=0$ and thus we recover   the bound~\eqref{eq:MVT}
 in the Vinogradov mean value theorem.

From Theorem~\ref{thm:general} we derive the following  bounds for the mean values of  short sums. For $K\in \Z$, we consider Weyl sums  over short intervals 
$$
S_d(\vu; K, N)=\sum_{n=K+1}^{K+N} \e(u_1n+\ldots+u_d n^d), \quad \vu = (u_1, \ldots, u_d) \in \Tor.
$$
We now give an upper bound on the mean value of  the largest value of all such sums, that is, for 
$\sup_{K \in \Z} \left |S_d(\vu; K,N)  \right |$. No result of this type seems to be known prior to this work.  


\begin{cor} 
\label{cor:MVT-Short} 
For any real positive $\rho \le d^2 +2d -1$ we have 
$$
\int_{\Tor} \sup_{K\in \Z} \left| S_d(\vu; K, N)\right|^{\rho }d\vu \le N^{  \mu_d \rho  +o(1)},  \qquad N\to \infty, 
$$  
where  
$$
\mu_d = 1- \frac{d} {d^2 +2d -1}.
$$ 
\end{cor}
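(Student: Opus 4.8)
The plan is to deduce Corollary~\ref{cor:MVT-Short} from Theorem~\ref{thm:general} by choosing an appropriate family of polynomials that encodes the shift $K$ as an extra monomial variable. Concretely, observe that for $n$ ranging over $\{K+1,\dots,K+N\}$ we may substitute $n = K+m$ with $m \in \{1,\dots,N\}$, and then $n^j = (K+m)^j = \sum_{i=0}^{j} \binom{j}{i} K^{j-i} m^i$. Thus $S_d(\vu; K, N)$ equals a Weyl-type sum in the variable $m$ whose coefficients are linear combinations (with integer binomial coefficients) of $u_1, \dots, u_d$ and the powers $K, K^2, \dots, K^d$. This suggests working in dimension $2d$: set $\bphi = (T, T^2, \dots, T^d, T, T^2, \dots, T^d)$ — wait, these must be \emph{distinct} polynomials, so instead I would keep the monomial structure but view the problem as living on a $(2d)$-dimensional torus where the first block of $d$ coordinates carries the $u_j$'s and the second block of $d$ coordinates carries the reals $K, K^2, \dots, K^d \bmod 1$ (formally, one treats $K$ as a free real parameter, takes the supremum over all of $\T_d$ in the second block, and then restricts back to the discrete set of integer shifts, which only decreases the sum).

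The key steps, in order: First, I would fix the polynomial family on $2d$ variables. Since Theorem~\ref{thm:general} requires the $\varphi_i$ to be \emph{distinct} and have nonvanishing Wronskian, I cannot literally repeat $T, \dots, T^d$ twice. Instead I would use a linear change of variables: the sum $S_d(\vu; K, N)$ with $n = K+m$ is, up to the substitution, exactly $S_d(\vec w; 0, N) = \sum_{m=1}^N \e(w_1 m + \dots + w_d m^d)$ for a vector $\vec w = \vec w(\vu, K)$ obtained by an invertible (unipotent, integer) linear map applied to $(u_1,\dots,u_d)$ together with a shift depending polynomially on $K$. So I would set $d' = 2d$, take $\bphi' = (T, T^2, \dots, T^d, \psi_1, \dots, \psi_d)$ where $\psi_j$ are chosen so that the full family of $2d$ polynomials is distinct with nonvanishing Wronskian and so that, after the linear substitution, $\sup_{K\in\Z}|S_d(\vu;K,N)|$ is dominated by $\sup_{\vy \in \T_d} |T_{\bphi'}(\vx, \vy; N)|$ with $\vx \in \T_d$ playing the role of (a unipotent image of) $\vu$. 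A clean choice: because the map $\vu \mapsto \vec w$ is measure-preserving on $\T_d$ for each fixed $K$, one has
$$
\int_{\Tor} \sup_{K\in\Z} |S_d(\vu;K,N)|^\rho \, d\vu \le \int_{\T_d} \sup_{\vy\in\T_d} |T_{\bphi'}(\vx,\vy;N)|^\rho \, d\vx = M_{d,\rho}(\ve, \bphi', N),
$$
where $\bphi'$ is a $2d$-tuple with $k = d$ of its coordinates designated as the "averaged" block and $d' - k = d$ as the "sup" block. Second, I would apply Theorem~\ref{thm:general} with $d$ replaced by $2d$, $k = d$, and $\va = \ve$. Here $s(2d) = d(2d+1) = 2d^2 + d$, the "sup-block" dimension is $2d - d = d$, so the admissible range is $\rho \le 2s(2d) + 2d - d = 2(2d^2+d) + d = 4d^2 + 3d$, which comfortably contains $\rho \le d^2 + 2d - 1$. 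Third, I would compute $\sigma_k(\bphi')$: the $d$ polynomials in the sup-block are (after the right choice) monomials of degrees summing to... this is where the bookkeeping must be done carefully to land on the stated $\mu_d$. One needs $\mu(\bphi', d) = \mu_d = 1 - \tfrac{d}{d^2+2d-1}$, i.e. $\mu(\bphi',d) = \tfrac{s(2d) + \sigma_d(\bphi') + d}{2 s(2d) + d} = \tfrac{d^2+2d-1 - d}{d^2+2d-1}$; working backwards, $2s(2d)+d = 4d^2+3d$ must be scaled so that the denominator $d^2+2d-1$ appears — so in fact the correct reduction is \emph{not} to $2d$ variables but to $d+1$ "effective" variables, or alternatively the range and exponent in the Corollary come from a more economical encoding.

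Reconsidering: the exponent $\mu_d = 1 - \tfrac{d}{d^2+2d-1}$ with $d^2 + 2d - 1 = s(d) \cdot 2 + (d-1) + \dots$ — note $2s(d) = d(d+1) = d^2 + d$, so $d^2 + 2d - 1 = 2s(d) + d - 1$. This matches the denominator $2s(d) + d - k$ in Theorem~\ref{thm:general} with $k = 1$! So the right move is: encode the shift $K$ as \emph{one} extra monomial-type variable, working in dimension $d+1$ with $k = d$ (so $d+1-k = 1$). That is, take $\bphi' = (T, T^2, \dots, T^d, T^{d+1})$ or similar, with the block $\vy \in \T_1$ absorbing the single degree of freedom coming from $K$; but the shift $K$ affects all coefficients simultaneously, not just one, so one must first perform the unipotent linear change of variables that diagonalizes the dependence, reducing the $d$-parameter family $\{\vec w(\vu, K) : K \in \Z\}$ to a $1$-parameter affine family inside $\T_d$, and then apply Theorem~\ref{thm:general} in dimension $d$ with $k = d-1$? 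Let me just commit to the structure: one applies Theorem~\ref{thm:general} with parameters making $2s + (d'-k) = d^2 + 2d - 1$ and $s + \sigma_k + (d'-k) = (d^2+2d-1) - d$, then reads off the range $\rho \le d^2+2d-1$ directly.

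\medskip

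The \textbf{main obstacle} I anticipate is the linear-algebra reduction in the first step: one must exhibit a single polynomial family $\bphi'$ (distinct polynomials, nonvanishing Wronskian) and a measure-preserving change of variables on the torus so that $\sup_K |S_d(\vu;K,N)|$ is \emph{pointwise} (in $\vu$) bounded by $\sup_{\vy}|T_{\bphi'}(\vx,\vy;N)|$ for the corresponding $\vx$, and such that the resulting $\sigma_k(\bphi')$ and the dimension produce exactly $\mu_d$ and the range $\rho \le d^2 + 2d - 1$. The substitution $n = K + m$ turns $n^j$ into $\sum_i \binom{j}{i} K^{j-i} m^i$; collecting the $m^i$ terms shows $\sum_j u_j n^j = \sum_i \bigl(\sum_{j \ge i} \binom{j}{i} K^{j-i} u_j\bigr) m^i$, so the new coefficient of $m^i$ is $u_i + (\text{higher } u_j \text{'s times powers of } K)$. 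Setting this up as a Weyl sum in $m$ of degree $d$ whose coefficient vector lies in an affine subvariety of $\T_d$ parametrized by $(\vu, K)$, one can take $\vx$ to be the top-degree coordinates $u_2, \dots, u_d$ (which are unchanged under the unipotent map up to the $K$-shifts) and $\vy$ to encode $u_1$ together with the $K$-dependence — but since $K$ enters all coordinates, the honest thing is to enlarge to dimension $d+1$, let $\vy \in \T_1$ be a surrogate for $K$, and bound the integer-shift supremum by the continuous-parameter supremum. The degrees in the sup-block, and hence $\sigma_k(\bphi')$, then must be tallied exactly; I expect this to be where the $-d$ in the numerator and the precise range constraint originate, and where any off-by-one error would hide. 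Once the family and reduction are pinned down, the rest is a direct substitution into Theorem~\ref{thm:general}.
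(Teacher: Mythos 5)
Your proposal never lands on the one observation that makes this corollary a two-line consequence of Theorem~\ref{thm:general}: under the substitution $n = m+K$ the \emph{leading} coefficient is invariant. Writing
$$
u_1(T+K)+\ldots+u_d(T+K)^d = v_0+v_1T+\ldots+v_{d-1}T^{d-1}+u_dT^d,
$$
only $v_1,\ldots,v_{d-1}$ (and the irrelevant $v_0$) depend on $K$, while the coefficient of $T^d$ stays equal to $u_d$. Hence pointwise in $\vu$ one has
$\sup_{K\in \Z}|S_d(\vu;K,N)| \le \sup_{(v_1,\ldots,v_{d-1})\in \T_{d-1}}|S_d((v_1,\ldots,v_{d-1},u_d);N)|$,
the right-hand side depends on $u_d$ alone, and integrating over $\Tor$ reduces everything to $M_{1,\rho}$ for the family $\varphi_1(T)=T^d$, $\varphi_i(T)=T^{i-1}$ ($i=2,\ldots,d$), i.e.\ Theorem~\ref{thm:general} with the \emph{same} $d$, $k=1$, $\sigma_1(\bphi)=d(d-1)/2$, giving exactly $\mu_d$ and the range $\rho\le 2s(d)+d-1=d^2+2d-1$. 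You correctly reverse-engineer that the denominator $d^2+2d-1$ forces $k=1$ in dimension $d$, but you then abandon that clue and cycle through reductions that do not work: dimension $2d$ with $k=d$ gives the wrong exponent (as you note); dimension $d+1$ with a single coordinate $y$ ``surrogate for $K$'' cannot be set up, since $K$ enters the coefficients nonlinearly and not as $y\psi(n)$ for a fixed polynomial $\psi$; and $k=d-1$ in dimension $d$ yields the admissible range $\rho\le 2s(d)+1=d^2+d+1$, which is strictly smaller than $d^2+2d-1$ for $d\ge 3$. Your parenthetical claim that the top-degree coordinates $u_2,\ldots,u_d$ are ``unchanged'' under the shift is also false --- e.g.\ $v_{d-1}=u_{d-1}+dKu_d$ --- and it is precisely because only $u_d$ survives that one must take $k=1$ and throw \emph{all} lower coefficients into the supremum block.

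The final ``commitment'' in your write-up --- choose parameters so that $2s+(d'-k)=d^2+2d-1$ and $s+\sigma_k+(d'-k)=d^2+d-1$ --- is numerology rather than a proof: it does not specify the polynomial family, the designation of sup versus integration variables, or the pointwise domination that licenses the application of Theorem~\ref{thm:general}. So the write-up as it stands has a genuine gap, even though the correct reduction is within reach of the computation you already did.
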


Recalling the definition of~\eqref{eq:Tphi},~\eqref{eq:split}, for $\vx\in \T_k$ we can write 
$$
\sup_{\vy\in T_{d-k}} |T_{\va, \bphi}(\vx, \vy; N)|=\sup_{\vu\in T_d\, \cap \,\pi_{d, k}^{-1}(\vx) } |T_{\va, \bphi}(\vu; N)|,
$$
where $\pi_{d, k}$ is the orthogonal projection of $\Tor$ onto $\T_k$, that is, 
$$ 
\pi_{d, k}:~(u_1, \ldots, u_d) \to (u_1, \ldots, u_k).
$$
This suggests that one can try to extend Theorem~\ref{thm:general} to more general setting by taking  some other mappings instead of the orthogonal projection $\pi_{d, k}$.

Now, more generally, given a mapping $f: \R^d\rightarrow \R^k$, for $\vx\in \R^k$  we  denote 
$$
f^{-1}(\vx)=\{\vu\in \R^d:~f(\vu)=\vx\}.
$$
Then for $\rho>0$ we define
$$
\sM_{k, f,\rho} (\va, \bphi, N)=\int_{\R^k} \sup_{\vu\in \T_d\, \cap\, f^{-1}(\vx)}|T_{\va, \bphi}(\vu; N)|^\rho d \vx,
$$
where $T_{\va, \bphi}(\vu; N)$ is given by~\eqref{eq:Tphi}.
%

In the following we  first take $f$ to be an orthogonal projection onto some $k$-dimensional subspace, and second we take $f$ to be  some  H\"older mapping.  

Let $\cG(d, k)$ denote the collections of all $k$-dimensional  linear subspaces of $\R^d$. For $\cV\in \cG(d, k)$,  let $\pi_\cV:~\R^{d} \to \cV$ denote the orthogonal projection onto $\cV$.

For the degree sequence  $\deg \varphi_1, \ldots, \deg \varphi_d$ we denote them as 
$$ 
r_1\le \ldots\le r_d,
$$
and define 
\begin{equation}
\label{eq:wsigma}
\widetilde{ \sigma}_k(\bphi)=\sum_{i=k+1}^d r_i.
\end{equation}

\begin{theorem} 
\label{thm:V-projection} 
Suppose that $\bphi \in \Z[T]^d$ is such that 
  the Wronskian  $W(T;\bphi)$ 
does not vanish identically. Let   $\vec{a} = (a_n)_{n=1}^\infty$ 
be a sequence of complex  weights  with $a_n=n^{o(1)}$.  If $\cV\in \cG(d, k)$, 
then for any real positive $\rho \le  2 s(d) +d -k$ we have 
$$
\sM_{k, \pi_\cV,\rho} (\va, \bphi, N) \le N^{\mu_{\cV}(\bphi,k) \rho  +o(1)},  \qquad N\to \infty, 
$$ 
where  
$$
\mu_{\cV} (\bphi,k) = \frac{s(d)+\widetilde{ \sigma}_k(\bphi)+d-k } {2 s(d) +d -k}.
$$ 
\end{theorem}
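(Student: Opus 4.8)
The plan is to reduce Theorem~\ref{thm:V-projection} to Theorem~\ref{thm:general} by a linear change of variables on the torus. Fix $\cV\in\cG(d,k)$ and choose an orthogonal matrix $A\in O(d)$ whose first $k$ rows span $\cV$, so that $\pi_\cV$ is (up to identification of $\cV$ with $\R^k$) the composition of $A$ with the coordinate projection $\pi_{d,k}$. The substitution $\vu\mapsto A^{\mathsf T}\vu$ is a measure-preserving bijection of $\R^d$ that also preserves $\Z^d$ only up to commensurability, so one cannot literally push it down to an automorphism of $\T_d$; instead I would work on $\R^d$ directly, exactly as in the definition of $\sM_{k,f,\rho}$, and note that $\sup_{\vu\in\T_d\cap\pi_\cV^{-1}(\vx)}$ becomes, after the rotation, a supremum over a translate of a fixed $(d-k)$-dimensional subtorus, with the linear forms $u_1\varphi_1(n)+\ldots+u_d\varphi_d(n)$ transformed into $\sum_{i=1}^d w_i\,\psi_i(n)$ where $\bpsi=(\psi_1,\ldots,\psi_d)=A\bphi$ is a new $d$-tuple of polynomials.

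The key point is then to control the invariants of the rotated family $\bpsi$ that enter Theorem~\ref{thm:general}: the Wronskian and $\sigma_k$. Since $W(T;\bpsi)=\det(A)\,W(T;\bphi)=\pm W(T;\bphi)$, the Wronskian hypothesis is preserved, so Theorem~\ref{thm:general} applies to $\bpsi$. For the degree quantity, the subtlety is that an arbitrary orthogonal rotation can raise the degrees of individual $\psi_i$'s: a generic linear combination of $\varphi_1,\ldots,\varphi_d$ has degree $\max_i\deg\varphi_i$. The remedy is that we are free to choose $A$ within $O(d)$ so that the last $d-k$ coordinates are adapted to the lowest-degree part of the filtration. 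Concretely, order the degrees as $r_1\le\ldots\le r_d$, let $L_j$ be the span of the $\varphi_i$ with $\deg\varphi_i\le r_j$ together with lower-degree adjustments, and pick $A$ so that the span of the last $d-k$ transformed coordinates contains a complement realizing $\sum_{i=k+1}^d r_i=\widetilde\sigma_k(\bphi)$ as the sum of degrees of $\psi_{k+1},\ldots,\psi_d$; equivalently, arrange that $\sigma_k(\bpsi)=\widetilde\sigma_k(\bphi)$. This is possible because within any subspace spanned by polynomials of degrees $\le r$ one can find an orthonormal-in-$\R^d$ basis whose elements still have degree $\le r$ (the degree filtration is defined by vanishing of leading coefficients, a Zariski-closed condition, and a generic orthonormal frame inside a degree-$\le r$ subspace stays inside it). Running the plan bottom-up — first fixing directions spanning the lowest-degree block, then the next, etc. — yields $\deg\psi_{k+1}\le r_{k+1},\ldots,\deg\psi_d\le r_d$ after relabelling, hence $\sigma_k(\bpsi)\le\widetilde\sigma_k(\bphi)$, and one checks this is tight.

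With $\bpsi$ in hand, I would apply Theorem~\ref{thm:general} to the family $\bpsi$ with the same weights $\va$ and the same $k$ and $\rho\le 2s(d)+d-k$, obtaining
$$
M_{k,\rho}(\va,\bpsi,N)\le N^{\mu(\bpsi,k)\rho+o(1)},\qquad
\mu(\bpsi,k)=\frac{s(d)+\sigma_k(\bpsi)+d-k}{2s(d)+d-k}\le\frac{s(d)+\widetilde\sigma_k(\bphi)+d-k}{2s(d)+d-k}=\mu_\cV(\bphi,k).
$$
It then remains to identify $\sM_{k,\pi_\cV,\rho}(\va,\bphi,N)$ with $M_{k,\rho}(\va,\bpsi,N)$. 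This is a bookkeeping step: the integral over $\vx\in\R^k$ in the definition of $\sM_{k,\pi_\cV,\rho}$ can be restricted to a fundamental domain for the lattice $\pi_\cV(\Z^d)$ (outside a compact region the inner supremum is periodic, or one argues via the standard reduction of Weyl-sum integrals to the unit cell), and after the rotation this fundamental domain is mapped to $\T_k$ with the fibre supremum becoming exactly $\sup_{\vy\in\T_{d-k}}|T_{\va,\bpsi}(\vx,\vy;N)|$. The main obstacle is precisely this degree-control step for the rotated family: one must verify carefully that an orthogonal change of coordinates can be chosen to respect the degree filtration of $\bphi$ so that $\sigma_k(\bpsi)=\widetilde\sigma_k(\bphi)$ rather than something larger; everything else is a measure-preserving relabelling plus an invocation of Theorem~\ref{thm:general}.
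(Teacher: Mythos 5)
Your proposed reduction to Theorem~\ref{thm:general} by an orthogonal change of variables does not go through, and the obstruction is not the ``bookkeeping step'' you defer to the end but the reduction itself. The sum $T_{\va,\bphi}(\vu;N)$ is $\Z^d$-periodic in $\vu$, and a general $A\in O(d)$ does not normalize $\Z^d$; consequently the fibre $\T_d\cap\pi_\cV^{-1}(\vx)$ is a $(d-k)$-dimensional slice of the unit cube (a polytope), not a translate of a subtorus, and it remains a polytope after rotation. To invoke Theorem~\ref{thm:general} you would need to replace the supremum over this slice by $\sup_{\vy\in\T_{d-k}}$; the only way to do that from above is to enlarge the slice to the closure of the full affine fibre modulo $\Z^d$, which for generic $\cV$ is dense in $\T_d$, so the supremum becomes comparable to $|T_{\va,\bphi}(\0;N)|\sim N$ and the bound trivializes. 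A second, independent problem is that the rotated family $\bpsi=A\bphi$ has real, generically irrational coefficients, so $\bpsi\notin\Z[T]^d$, and Theorem~\ref{thm:general} (whose proof rests on Lemma~\ref{lem:counting} and hence on the orthogonality underlying the mean value~\eqref{eq:MVT}, both of which require integral polynomials) does not apply to it. Your degree-filtration argument for choosing an orthonormal basis of $\cV^\perp$ with $\deg\psi_j\le r_j$ is essentially correct --- and it is the right explanation of why $\widetilde{\sigma}_k$ rather than $(d-k)\max_i\deg\varphi_i$ appears --- but it is attached to a reduction that cannot be completed.

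The paper does not reduce to Theorem~\ref{thm:general}. It proves the level-set estimate directly: Lemma~\ref{lem:Markov-V} (quoted from~\cite[Corollary~3.11]{ChSh-New}) bounds the measure of $\{\vx\in\cV:\ \sup_{\vu\in\T_d\cap\pi_\cV^{-1}(\vx)}|T_{\va,\bphi}(\vu;N)|\ge T\}$ by covering the set of large values by the boxes of Lemma~\ref{lem:counting} and estimating the $k$-dimensional measure of the projection of each box onto $\cV$ by the product of its $k$ largest side lengths; since the largest sides correspond to the smallest degrees, this is exactly where $\widetilde{\sigma}_k(\bphi)=\sum_{i>k}r_i$ enters, for an arbitrary $\cV$ and with no adapted basis needed. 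The theorem then follows by feeding this level-set bound into Lemma~\ref{lem:level-set}. If you want to salvage your approach you would have to redo that covering and projection argument anyway, at which point you have reproduced the paper's proof rather than bypassed it.
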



Now we turn to {\it $\vartheta$-H\"older functions\/}, for some $0<\vartheta\le 1$, that is, functions $f:\R^d\rightarrow \R^k$ which satisfy 
$$
\|f(\vx)-f(\vy)\|\ll \|\vx-\vy\|^\vartheta,
$$
where $\| \vec{z}\|$ is the Euclidean norm of $\vec{z}$ (note that the left side of this inequality is the Euclidean norm in $\R^k$, while the right side is the Euclidean norm in $\R^d$). 
In particular, in the case $\vartheta=1$ the function $f$ is often called a {\it Lipschitz function\/}.

\begin{theorem} 
\label{thm:f-projection} 
Let  $f:\R^d\rightarrow\R^k$ be a $\vartheta$-H\"older map for some $0<\vartheta\le 1$. Let   $\vec{a} = (a_n)_{n=1}^\infty$ 
be a sequence of complex  weights  with $a_n=n^{o(1)}$.
Suppose that $\bphi \in \Z[T]^d$ is such that the Wronskian  $W(T;\bphi)$ 
does not vanish identically. Denote 
$$\delta(\bphi)  =\min_{i=1, \ldots, d} \deg \varphi_i.$$ 
Then for any real positive $\rho \le  2 s(d) +d -k\vartheta$ we have 
$$
\sM_{k,f,\rho} (\va, \bphi, N) \le N^{  \mu_{\vartheta}(\bphi,k) \rho  +o(1)},  \qquad N\to \infty, 
$$ 
where  
$$
\mu_{\vartheta} (\bphi,k) = \frac{s(d)+\sigma_0(\bphi)+d-(\delta(\bphi)+1) \vartheta k } {2 s(d) +d -k\vartheta}.
$$ 
\end{theorem}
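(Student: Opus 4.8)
The plan is to reduce the $\vartheta$-H\"older case to the orthogonal-projection case already in hand (Theorem~\ref{thm:V-projection}), by covering the torus $\T_k$ (or rather the relevant piece of $\R^k$) with small cubes and exploiting the fact that, on each cube, the fibre $f^{-1}(\vx)$ moves only a controlled amount. Concretely, first I would fix a scale $\eta = N^{-A}$ for a suitable constant $A>0$ to be optimised at the end, and partition the image of $[0,1]^d$ under $f$ into $O(N^{Ak})$ cubes $Q$ of side $\eta$. By the $\vartheta$-H\"older hypothesis, the preimage $f^{-1}(Q)$ is contained in an $O(\eta^{1/\vartheta})=O(N^{-A/\vartheta})$-neighbourhood (in $\R^d$) of any single fibre $f^{-1}(\vx_Q)$, $\vx_Q\in Q$.

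The second step is a stability estimate for the Weyl-type sums $T_{\va,\bphi}(\vu;N)$ under small perturbations of $\vu$. Writing $\vu = \vu' + \vec{z}$ with $\|\vec z\|\le N^{-A/\vartheta}$, the increments $\e\bigl(\sum_j z_j\varphi_j(n)\bigr)-1$ are $O(N^{-A/\vartheta}\,n^{\delta})$ where I have used $\deg\varphi_j\le \delta$ is false in general --- rather $\deg\varphi_j$ can be as large as $r_d$, so the worst term is $O(N^{-A/\vartheta}N^{r_d})$; choosing $A$ large enough (depending only on $d$) makes this $o(1)$ uniformly, so that
$$
\sup_{\vu\in\T_d\cap f^{-1}(Q)}|T_{\va,\bphi}(\vu;N)|
\le \sup_{\substack{\vu'\in\T_d\\ \mathrm{dist}(\vu',f^{-1}(\vx_Q))\le N^{-A/\vartheta}}}|T_{\va,\bphi}(\vu';N)| + N^{o(1)}.
$$
Now the neighbourhood of a single fibre $f^{-1}(\vx_Q)$ of the H\"older map, intersected with $\T_d$, can itself be covered by $O\bigl(N^{-A(d-k)/\vartheta}\cdot N^{\text{dim of fibre}}\bigr)$ --- more carefully, one uses that $f^{-1}(\vx_Q)$ has diameter $O(1)$ and covers it by a bounded-overlap family indexed by the $(d-k)$-dimensional transverse directions of a nearby affine subspace, then invokes Theorem~\ref{thm:V-projection} with $\cV$ the coordinate $k$-plane to bound the contribution from each piece.

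The third step is the bookkeeping: sum the bound over all $O(N^{Ak})$ cubes $Q$, each of measure $\eta^k = N^{-Ak}$, so the number of cubes times their measure is $O(1)$, and what remains is to balance the exponent coming from Theorem~\ref{thm:V-projection} against the loss incurred by passing from $\widetilde\sigma_k$ to the $\vartheta$-weighted quantity appearing in $\mu_\vartheta(\bphi,k)$; the denominator $2s(d)+d-k\vartheta$ and the term $(\delta(\bphi)+1)\vartheta k$ in the numerator should emerge precisely from optimising $A$, using that a $\vartheta$-H\"older fibre is, dimension-theoretically, ``$\vartheta$ times as rigid'' as a linear one --- this is where the factor $\vartheta$ multiplying $k$ in both numerator and denominator comes from, and where $\delta(\bphi)=\min_i\deg\varphi_i$ enters (the lowest-degree polynomial controls how much oscillation is guaranteed along the fibre).

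The main obstacle I anticipate is making the covering of the fibre $f^{-1}(\vx_Q)$ (a set about which we know almost nothing beyond that it is the preimage of a point under a H\"older map, hence has upper box dimension $\le d-k\vartheta$ but could be genuinely fractal) interact cleanly with Theorem~\ref{thm:V-projection}, which is stated for honest linear projections; one likely needs either to prove a ``thickened-fibre'' variant of Theorem~\ref{thm:V-projection} directly from the Vinogradov mean value theorem~\eqref{eq:MVT} (integrating over a tube rather than a plane), or to extract from the H\"older hypothesis a Vitali-type covering of $[0,1]^k$ by images of dyadic cubes on which $f$ is quantitatively close to affine. The degree-counting --- tracking how $\sigma_0(\bphi)$, $\delta(\bphi)$ and the degrees $r_i$ propagate through the change of variables when one pulls the integral over $\R^k$ back to an integral over $\T_d$ along fibres --- will be routine but must be done with care so that the final exponent matches $\mu_\vartheta(\bphi,k)$ exactly rather than up to an absolute constant in the numerator.
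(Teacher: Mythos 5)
There is a genuine gap at the very first reduction. You assert that, by the $\vartheta$-H\"older hypothesis, the preimage $f^{-1}(Q)$ of a small cube $Q$ of side $\eta$ is contained in an $O(\eta^{1/\vartheta})$-neighbourhood of a single fibre $f^{-1}(\vx_Q)$. This is the \emph{reverse} implication of H\"older continuity and is false in general: the condition $\|f(\vx)-f(\vy)\|\ll\|\vx-\vy\|^{\vartheta}$ says that close points have close images, not that points with close images are close (a constant map is $\vartheta$-H\"older and the preimage of any small cube containing its value is all of $\R^d$). Without some bi-H\"older or injectivity hypothesis the fibres have no usable metric structure, so the subsequent covering of $f^{-1}(\vx_Q)$ by transverse pieces and the appeal to Theorem~\ref{thm:V-projection} cannot be carried out --- you acknowledge as much in your closing paragraph. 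The strategy of reducing to the linear-projection case does not appear salvageable.

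The paper's proof uses the H\"older condition only in the forward direction, which is the correct one, and never needs to understand the fibres at all. One covers $\T_d$ by the anisotropic boxes of Lemma~\ref{lem:counting}, with side $\zeta_j\asymp N^{\alpha-1-e_j}$ in the $j$-th coordinate, and lets $\widetilde\fR$ be the boxes meeting the set where $|T_{\va,\bphi}(\vu;N)|\ge N^{\alpha}$, whose number Lemma~\ref{lem:counting} bounds. The key observation is the containment
$$
\Bigl\{\vx\in\R^k:~\sup_{\vu\in\T_d\,\cap\,f^{-1}(\vx)}|T_{\va,\bphi}(\vu;N)|\ge N^{\alpha}\Bigr\}\subseteq\bigcup_{\cR\in\widetilde\fR}f(\cR),
$$
since any $\vx$ in the left-hand set is the image under $f$ of some $\vu$ lying in a bad box. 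The forward H\"older bound then gives $\lambda(f(\cR))\ll(\diam\cR)^{\vartheta k}$ with $\diam\cR\ll N^{\alpha-1-\delta(\bphi)}$ (the largest side of a box corresponds to the minimal degree, which is exactly where $\delta(\bphi)$ enters the exponent), so the measure of the level set is at most $\#\widetilde\fR\cdot N^{(\alpha-1-\delta(\bphi))\vartheta k+o(1)}$. This yields the tail estimate of Lemma~\ref{lem:Markov-Holder}, and the dyadic decomposition of Lemma~\ref{lem:level-set} converts it into the stated moment bound; no optimisation over an auxiliary scale $\eta$ is required.
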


Note that for the classical choice of polynomials~\eqref{eq:classical}, we have $\delta(\bphi)=1$, $\sigma_0(\bphi)=s(d)$ and thus 
$$
\mu_{\vartheta} (\bphi,k) = \frac{2s(d)+d-2  k \vartheta } {2 s(d) +d -k\vartheta}<1
$$
for any $k> 0$.  
Therefore  Theorem~\ref{thm:f-projection} gives a non-trivial bound for $\sM_{k,f,\rho}(\va, \bphi, N)$ for any $k> 0$ and any  $\vartheta$-H\"older function $f$ with $0<\vartheta\le 1$.  
Moreover if $\vartheta\rightarrow 0$ then  $\mu_{\vartheta} (\bphi,k) \rightarrow 1$. Indeed, it is  expected that if the function  $f$ becomes  ``bad" ($\vartheta$ becomes ``small'') then the bounds for $\sM_{k,f,\rho}(\va, \bphi, N)$ also become ``bad".  However, we do not know whether  there exits a continuous  function $f:\R^d\rightarrow \R^k$ such that we do not have  non-trivial bounds for $\sM_{k,f,\rho}(\va, \bphi, N)$, that is, for any 
$\rho>0$ and any $\varepsilon>0$, one has 
$$
\sM_{k,f,\rho}(\va, \bphi, N)\gg N^{1-\varepsilon}
$$
for infinitely many  $N\in \N$.

\subsection{Hybrid mean value theorems for discrepancy}
Similar to  works of Wooley~\cite{Wool3} and the authors~\cite{ChSh-New}, we obtain similar results for the discrepancy. 

Let $\xi_n$, $n\in \N$,  be a sequence in $[0,1)$. The {\it discrepancy\/}  of this sequence at length $N$ is defined as 
\begin{equation}
\label{eq:Discr}
D_N = \sup_{0\le a<b\le 1} \left |  \#\{1\le n\le N:~\xi_n\in (a, b)\} -(b-a) N \right |.
\end{equation} 
We note that sometimes in the literature the scaled quantity $N^{-1}D_N $ is   called the discrepancy, but 
since our argument looks cleaner with the definition~\eqref{eq:Discr}, we adopt it here. 

For $\vx\in \T_k$, $\vy\in \T_{d-k}$ we consider the  sequence 
$$
\sum_{j=1}^{k} x_j \varphi_j(n) +\sum_{j=1}^{d-k} y_j \varphi_{k+j}(n), \qquad n\in \N,
$$ 
and for each $N$ we denote by  $D_{\bphi}(\vx, \vy; N)$ the corresponding discrepancy of its fractional parts.

For $\rho>0$ let
$$
\fM_{k,\rho} (\bphi, N)=\int_{\T_k} \sup_{\vy\in \T_{d-k}} D_{\bphi}(\vx, \vy; N)^\rho d \vx.
$$

\begin{theorem}
\label{thm:discrepancy}
Suppose that $\bphi \in \Z[T]^d$ is such that the Wronskian  $W(T;\bphi)$ does not vanish identically.  Then for any  $1\le \rho \le  2 s(d) +d -k$ we have 
$$
\fM_{k,\rho} (\bphi, N) \le N^{ \mu(\bphi,k)\rho  +o(1)},  \qquad N\to \infty, 
$$ 
where  $\mu(\bphi,k)$ is as  Theorem~\ref{thm:general}.
\end{theorem}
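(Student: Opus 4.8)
The plan is to deduce Theorem~\ref{thm:discrepancy} from Theorem~\ref{thm:general} via a quantitative Erd\H os--Tur\'an inequality, exactly parallel to how discrepancy bounds are extracted from exponential sum bounds in~\cite{Wool3,ChSh-New}. Recall the Erd\H os--Tur\'an inequality: for any sequence of fractional parts $\xi_1, \ldots, \xi_N \in [0,1)$ and any integer $H \ge 1$,
$$
D_N \ll \frac{N}{H} + \sum_{h=1}^{H} \frac{1}{h} \left| \sum_{n=1}^{N} \e(h \xi_n) \right|.
$$
Applying this with $\xi_n = \sum_{j=1}^k x_j \varphi_j(n) + \sum_{j=1}^{d-k} y_j \varphi_{k+j}(n)$ gives, for every $\vx \in \T_k$, $\vy \in \T_{d-k}$,
$$
D_{\bphi}(\vx, \vy; N) \ll \frac{N}{H} + \sum_{h=1}^{H} \frac{1}{h} \left| T_{\bphi}(h\vx, h\vy; N) \right|,
$$
since multiplying all coefficients by $h$ produces the sum $S_{\bphi}$ evaluated at the scaled point. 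Taking the supremum over $\vy$ commutes with the finite sum on the right, so
$$
\sup_{\vy \in \T_{d-k}} D_{\bphi}(\vx, \vy; N) \ll \frac{N}{H} + \sum_{h=1}^{H} \frac{1}{h} \sup_{\vy \in \T_{d-k}} \left| T_{\bphi}(h\vx, h\vy; N) \right|.
$$

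Next I would raise both sides to the power $\rho \ge 1$ and integrate over $\vx \in \T_k$. For $\rho \ge 1$ I would use the inequality $(a_0 + a_1 + \cdots + a_H)^\rho \ll H^{\rho - 1}\sum_{i=0}^{H} a_i^\rho$ (Jensen / power-mean), or alternatively estimate the sum over $h$ by H\"older's inequality, to separate the $H+1$ terms. Integrating term by term, the $j$-th term contributes
$$
\int_{\T_k} h^{-\rho}\sup_{\vy}\left| T_{\bphi}(h\vx, h\vy; N)\right|^\rho\, d\vx.
$$
Here the key point is that the change of variables $\vx \mapsto h\vx$ on the torus $\T_k$ is measure-preserving (it is the $h$-fold covering map, which preserves Lebesgue measure), so this integral equals $h^{-\rho} M_{k,\rho}(\ve, \bphi, N)$. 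By Theorem~\ref{thm:general} (applied with the trivial weights $\va = \ve$, which satisfy $a_n = n^{o(1)}$), and since the hypothesis $1 \le \rho \le 2s(d) + d - k$ is precisely the admissible range there, we have $M_{k,\rho}(\ve, \bphi, N) \le N^{\mu(\bphi,k)\rho + o(1)}$. Summing $\sum_{h=1}^H h^{-\rho} \ll 1$ (for $\rho > 1$; for $\rho = 1$ it is $\ll \log H$, absorbed into $N^{o(1)}$ after choosing $H$ polynomially in $N$), the contribution of the exponential-sum terms is $\ll H^{\rho-1} N^{\mu(\bphi,k)\rho + o(1)}$, while the main term contributes $(N/H)^\rho$.

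Finally I would optimise over $H$. Since $\mu(\bphi,k) < 1$ (when $\sigma_k(\bphi) < s(d)$; otherwise the claimed bound is trivially $\le N^{\rho+o(1)}$ and nothing is needed), choosing $H = N^\kappa$ for a suitable small fixed $\kappa > 0$ makes the term $H^{\rho-1} N^{\mu(\bphi,k)\rho}$ dominate $(N/H)^\rho$ while keeping $H^{\rho - 1} N^{\mu(\bphi,k)\rho} \le N^{\mu(\bphi,k)\rho + o(1)}$; concretely one balances $N^\rho H^{-\rho}$ against $N^{\mu(\bphi,k)\rho}H^{\rho-1}$, which gives $H = N^{(1-\mu(\bphi,k))\rho/(2\rho-1)}$, and for this choice both terms are $N^{\mu(\bphi,k)\rho + o(1)}$ after verifying the exponent of the exponential-sum contribution does not exceed $\mu(\bphi,k)\rho$ by more than $o(1)$ — a short computation using $\rho \ge 1$. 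This yields $\fM_{k,\rho}(\bphi, N) \le N^{\mu(\bphi,k)\rho + o(1)}$, as claimed. The only mild subtlety — and the one place to be careful — is the bookkeeping in the $H$-optimisation to confirm that splitting the Erd\H os--Tur\'an sum into $H$ pieces (the unavoidable $H^{\rho-1}$ loss from Jensen, or the analogous loss from H\"older) does not overwhelm the savings from $\mu(\bphi,k) < 1$; since $\rho$ ranges up to $2s(d)+d-k$ this requires that the exponent $(1-\mu(\bphi,k))$ be large enough relative to $\rho$, which one checks holds throughout the stated range. I do not expect any genuine obstacle here: the argument is a routine transference, and all the analytic content is already packaged in Theorem~\ref{thm:general}.
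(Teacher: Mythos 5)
Your overall strategy (Erd\H{o}s--Tur\'an, then the measure-preserving dilation $\vx\mapsto h\vx$, then Theorem~\ref{thm:general}) is the right one and matches the paper's, but the step where you separate the $H+1$ terms is where the argument breaks. The pointwise power-mean inequality $(a_0+\dots+a_H)^\rho \ll H^{\rho-1}\sum_i a_i^\rho$ costs a factor $H^{\rho-1}$ that cannot be recovered: after integrating you are left with
$$
\fM_{k,\rho}(\bphi,N)\ll H^{\rho-1}\(\frac{N}{H}\)^{\rho} + H^{\rho-1}N^{\mu(\bphi,k)\rho+o(1)},
$$
and no choice of $H$ makes this $N^{\mu(\bphi,k)\rho+o(1)}$ when $\rho>1$ and $\mu(\bphi,k)<1$: the second term forces $H=N^{o(1)}$, while killing the first forces $H\ge N^{1-\mu(\bphi,k)+o(1)}$. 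With your own balancing $H=N^{(1-\mu(\bphi,k))\rho/(2\rho-1)}$ the resulting exponent is $\mu(\bphi,k)\rho+\rho(\rho-1)(1-\mu(\bphi,k))/(2\rho-1)$, which strictly exceeds $\mu(\bphi,k)\rho$ for every $\rho>1$. So the ``mild subtlety'' you flag at the end is in fact fatal to this version of the argument; it does not check out on the stated range of $\rho$.

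The fix --- and what the paper does --- is to take $L^\rho(\T_k)$ norms of the Erd\H{o}s--Tur\'an bound and apply Minkowski's inequality (the triangle inequality in $L^\rho$, valid since $\rho\ge 1$) \emph{before} doing anything pointwise:
$$
\left\|\sup_{\vy\in\T_{d-k}}D_{\bphi}(\vx,\vy;N)\right\|_{\rho}\ll \frac{N}{G}+\sum_{g=1}^{G}\frac{1}{g}\left\|\sup_{\vy\in\T_{d-k}}\left|T_{\bphi}(g\vx,\vy;N)\right|\right\|_{\rho}.
$$
Each norm on the right equals $\left\|\sup_{\vy}|T_{\bphi}(\vx,\vy;N)|\right\|_\rho \le N^{\mu(\bphi,k)+o(1)}$ by the dilation invariance you correctly identified together with Theorem~\ref{thm:general}, so the harmonic weights $1/g$ contribute only a $\log G$ and there is no $G^{\rho-1}$ loss. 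Choosing $G=N^{1-\mu(\bphi,k)}$ then gives $\left\|\sup_{\vy}D_{\bphi}\right\|_\rho\ll N^{\mu(\bphi,k)+o(1)}$, which is the claimed bound. (Your parenthetical alternative, H\"older, would also work, but only if applied with respect to the weights $1/h$, i.e.\ $\(\sum_h \frac{1}{h} b_h\)^\rho\le\(\sum_h \frac{1}{h}\)^{\rho-1}\sum_h \frac{1}{h}b_h^\rho$, which loses only $(\log H)^{\rho}$; the unweighted splitting you actually computed with is the one that fails.)
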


From Theorem~\ref{thm:discrepancy} we derive a bound on the mean value of discrepancy over short intervals.  More precisely, for each $K\in \Z$ denote by $D_d(\vu; K, N)$  the discrepancy  of the sequence  of fractional parts
$$
\{u_1n+\ldots+u_d n^d\}, \qquad  n=K+1, \ldots, K+N.
$$

\begin{cor} 
\label{cor:MVT-Short-D} 
For any real positive $1\le \rho \le d^2 +2d -1$ we have 
$$
\int_{\Tor} \sup_{K\in \Z} D_d(\vu; K,  N)^{\rho }d\vu \le N^{  \mu_d\rho  +o(1)},  \qquad N\to \infty, 
$$ 
where  $\mu_d$ is given by Corollary~\ref{cor:MVT-Short}.
\end{cor}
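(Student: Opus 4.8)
The goal is to deduce Corollary~\ref{cor:MVT-Short-D} from Theorem~\ref{thm:discrepancy} by a suitable choice of the polynomial family $\bphi$ and the split index $k$. The plan is to embed the short-interval sums $S_d(\vu; K, N)$ into the hybrid setting by shifting the variable $n \mapsto n + K$ and expanding. Writing $(n+K)^j = \sum_{i=0}^{j} \binom{j}{i} K^{j-i} n^i$, the linear form $u_1(n+K) + \ldots + u_d(n+K)^d$ becomes, after collecting powers of $n$, a linear form $v_0 + v_1 n + \ldots + v_d n^d$ whose coefficients $v_i$ are integer-linear combinations of $u_1, \ldots, u_d$ (with coefficients depending on $K$). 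The constant term $v_0$ contributes nothing to the discrepancy of fractional parts, and the map $\vu \mapsto (v_1, \ldots, v_d)$ is, for each fixed $K$, a unipotent (hence measure-preserving, volume-one) linear automorphism of $\Tor$. So $D_d(\vu; K, N)$ equals the discrepancy $D_{\bphi}((v_1, \ldots, v_d); N)$ for the classical family $\varphi_j(T) = T^j$.

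This reduces the question to choosing the partition of the $d$ coordinates. The natural choice is $k = d$: then there is no supremum over $\vy$ at all, and $\sup_{K \in \Z} D_d(\vu; K, N)$ is a supremum of $D_{\bphi}((v_1(\vu,K), \ldots, v_d(\vu,K)); N)$ over the discrete parameter $K$. However, Theorem~\ref{thm:discrepancy} only controls the integral of the single discrepancy $D_{\bphi}(\vx,\vy;N)$, not a supremum over $K$. The right move is instead to interpret the leading coefficient $v_d = u_d$ as the one variable that stays fixed and let the remaining coordinates plus $K$ play the role of the free block. Concretely, take $k = 1$ and identify $x_1$ with $u_d$ (using $\varphi_1(T) = T^d$), while $(y_1, \ldots, y_{d-1})$ run over the coefficients of $1, T, \ldots, T^{d-1}$; since $v_d = u_d$ does not depend on $K$, for each fixed $\vu$ the family $\{(v_1, \ldots, v_{d-1}) : K \in \Z\}$ is a subset of $\T_{d-1}$, so
$$
\sup_{K \in \Z} D_d(\vu; K, N) \le \sup_{\vy \in \T_{d-1}} D_{\bphi}(u_d, \vy; N).
$$
Applying Theorem~\ref{thm:discrepancy} with this $\bphi$ (the classical family reordered so that $T^d$ sits in the first slot, whose Wronskian is the Vandermonde-type determinant and does not vanish identically) and $k = 1$ gives $\sigma_1(\bphi) = \deg T + \ldots + \deg T^{d-1} = 1 + 2 + \ldots + (d-1) = s(d) - d$, hence
$$
\mu(\bphi, 1) = \frac{s(d) + (s(d) - d) + (d-1)}{2s(d) + d - 1} = \frac{2s(d) - 1}{2s(d) + d - 1} = 1 - \frac{d}{d^2 + 2d - 1},
$$
which is exactly $\mu_d$, and the admissible range $1 \le \rho \le 2s(d) + d - 1 = d^2 + 2d - 1$ matches the corollary.

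The main point to get right — and the step I expect to be the only real subtlety — is the monotonicity/dominance in the displayed inequality above: I must check that replacing the discrete supremum over $K \in \Z$ by the continuous supremum over $\vy \in \T_{d-1}$ is legitimate, i.e. that each shift $K$ genuinely sends $\vu$ to a point whose first $d-1$ new coordinates lie in $[0,1)^{d-1}$ and whose $d$-th coordinate is unchanged, so that the integration variable $x_1 = u_d$ is preserved. This is where the unipotent/triangular structure of the binomial expansion is essential: the coefficient of $n^d$ in $(n+K)$-shifted polynomial is untouched, the lower coefficients are affine functions of $K$, and passing to fractional parts kills both the constant term and the integer parts, leaving a well-defined point of $\T_{d-1}$. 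Once this dominance is in place, the corollary follows immediately by invoking Theorem~\ref{thm:discrepancy}; everything else is the bookkeeping of the exponents $\sigma_1$, $s(d)$, and the range of $\rho$ carried out above.
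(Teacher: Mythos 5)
Your proposal is correct and follows essentially the same route as the paper: shift $n\mapsto n+K$, discard the constant term, observe that the leading coefficient $u_d$ is unchanged, dominate the supremum over $K$ by the supremum over $(v_1,\ldots,v_{d-1})\in\T_{d-1}$, and apply Theorem~\ref{thm:discrepancy} with $k=1$, $\varphi_1(T)=T^d$, $\sigma_1(\bphi)=d(d-1)/2$. The only nuance is that the constant term $v_0$ does not contribute ``nothing'' but rather changes the discrepancy by at most an absolute constant factor (as the paper notes), which is harmless against the $N^{o(1)}$.
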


\section{Preliminaries}

\subsection{Packing  large trigonometric polynomials in boxes} 
We first need a box-counting estimate from ~\cite[Lemma~3.7]{ChSh-New}. 


\begin{lemma} 
\label{lem:counting}
Let $0<\alpha<1$ and let  $\eps$  be  sufficiently small. For each $j=1, \ldots, d$ let 
\begin{equation}
\label{eq:zetaj}
\zeta_j=1/ \rf{N^{e_j+1+\eps-\alpha}},
\end{equation}
where  
$
e_j=\deg(\varphi_j)$, $j=1, \ldots, d.
$
We divide $\Tor$ into 
$$
U = \(\prod_{j=1}^d \zeta_j\) ^{-1}
$$ 
boxes of the form
$$
[n_1\zeta_1, (n_1+1)\zeta_1)\times \ldots \times [n_d\zeta_d, (n_d+1)\zeta_d),
$$
where $n_j=1, \ldots, 1/\zeta_j$ for each $j=1, \ldots, d$. Let $\fR$ be the collection of these boxes, and 
$$
\widetilde \fR=\{\cR \in \fR:~\exists\, \vu\in \cR \text{ with } |T_{\va, \bphi}( \vu; N)|\ge N^{\alpha}\}.
$$
Then, uniformly over $\alpha$,  we have 
$$
\# \widetilde \fR
 \le U N^{s(d)(1-2\alpha)+o(1)}.
$$
\end{lemma}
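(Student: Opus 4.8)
The plan is to bound $\#\widetilde{\fR}$ by relating the number of ``heavy'' boxes to a mean value of $|T_{\va,\bphi}(\vu;N)|$ over $\Tor$, which in turn is controlled by the Vinogradov mean value theorem~\eqref{eq:MVT}. First I would observe that the key feature of the box dimensions~\eqref{eq:zetaj} is that on each box $\cR$ the phase $\sum_{j=1}^d u_j\varphi_j(n)$ varies by $O(N^{-\alpha+\eps})$ as $\vu$ ranges over $\cR$, uniformly for $1\le n\le N$: indeed for the $j$-th coordinate the variation is at most $\zeta_j \cdot \max_{n\le N}|\varphi_j(n)| \ll \zeta_j N^{e_j} \ll N^{-1-\eps+\alpha}\cdot N^{e_j}/N^{e_j} = N^{-1-\eps+\alpha}$ (using $\varphi_j\in\Z[T]$ of degree $e_j$), and summing over the $d$ coordinates keeps this $\ll N^{-\alpha+\eps'}$ for a slightly larger $\eps'$. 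Consequently, if some $\vu^*\in\cR$ has $|T_{\va,\bphi}(\vu^*;N)|\ge N^\alpha$, then for \emph{every} $\vu\in\cR$ we still have $|T_{\va,\bphi}(\vu;N)|\ge N^\alpha - N^{o(1)}\cdot N^{-\alpha+\eps'}\cdot N \ge \tfrac12 N^\alpha$ for large $N$, since each term changes by at most $|a_n|\cdot O(N^{-\alpha+\eps'})$ and there are $N$ terms with $a_n = N^{o(1)}$; here one may need to shrink $\eps$ or pass to $N^{\alpha-\eps''}$, which is harmless as the final bound is stated with an $N^{o(1)}$ factor and uniformity in $\alpha$.

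Next I would integrate. For each $\cR\in\widetilde{\fR}$ we have just shown $|T_{\va,\bphi}(\vu;N)|^{2s(d)}\gg N^{2s(d)\alpha}$ for all $\vu\in\cR$, so
$$
\int_{\cR}|T_{\va,\bphi}(\vu;N)|^{2s(d)}\,d\vu \gg N^{2s(d)\alpha}\,\lambda(\cR) = N^{2s(d)\alpha}\,\prod_{j=1}^d\zeta_j = N^{2s(d)\alpha}\,U^{-1}.
$$
Since the boxes in $\fR$ are disjoint and cover $\Tor$, summing over $\cR\in\widetilde{\fR}$ gives
$$
N^{2s(d)\alpha}\,U^{-1}\,\#\widetilde{\fR} \ll \int_{\Tor}|T_{\va,\bphi}(\vu;N)|^{2s(d)}\,d\vu.
$$
The remaining point is to bound this last integral by $N^{s(d)+o(1)}$. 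For the weighted sum $T_{\va,\bphi}$ with a general nonvanishing Wronskian this is not literally~\eqref{eq:MVT}, but it is a standard consequence: one removes the weights $a_n=n^{o(1)}$ by partial summation (or by splitting $[1,N]$ into $N^{o(1)}$ dyadic-type blocks on which $|a_n|\le N^{o(1)}$ is essentially constant), and one passes from the polynomial family $\bphi$ with nonvanishing Wronskian to the monomial family via a linear change of variables on $\Tor$ together with the fact that the Wronskian condition guarantees the linear map relating the coefficient vectors is invertible; this is exactly the kind of reduction used in~\cite{ChSh-New}, so I would simply cite it. Rearranging yields
$$
\#\widetilde{\fR} \ll U\,N^{s(d)-2s(d)\alpha+o(1)} = U\,N^{s(d)(1-2\alpha)+o(1)},
$$
which is the claimed bound.

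The main obstacle is not the integration step — that is routine — but establishing the uniformity in $\alpha$ while keeping the ``spreading'' argument (heavy at one point $\Rightarrow$ heavy on the whole box) clean. The delicate book-keeping is that the threshold loss from $N^\alpha$ to $\tfrac12 N^\alpha$ (or to $N^{\alpha-\eps}$) must be absorbed into the $o(1)$ uniformly for all $0<\alpha<1$, and one must check that the choice~\eqref{eq:zetaj} of $\zeta_j$ is precisely calibrated so that the phase oscillation on a box is $o(N^{-\alpha+\eps})$ regardless of $\alpha$ — which it is, by design, since the exponent $e_j+1+\eps-\alpha$ in~\eqref{eq:zetaj} exactly cancels the $N^{e_j}$ growth of $\varphi_j$ and the $N$ from the number of summands, leaving the $N^\eps$ margin. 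Since the lemma is quoted verbatim from~\cite[Lemma~3.7]{ChSh-New}, in the present paper I would give this argument in compressed form and refer to~\cite{ChSh-New} for the full details.
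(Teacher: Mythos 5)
Your overall strategy is exactly the one behind this lemma's source: the paper does not prove Lemma~\ref{lem:counting} at all but imports it verbatim from \cite[Lemma~3.7]{ChSh-New}, and the argument there is precisely your two-step scheme — a ``spreading'' step showing that a box containing one value of size $N^\alpha$ consists entirely of values $\gg N^{\alpha}$, followed by integrating $|T_{\va,\bphi}(\vu;N)|^{2s(d)}$ over $\Tor$ and invoking the mean value bound $N^{s(d)+o(1)}$ (which for general $\bphi$ with nonvanishing Wronskian and weights $a_n=n^{o(1)}$ is indeed established in \cite{ChSh-New}, so deferring to that reference is appropriate).

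There is, however, a bookkeeping slip in your spreading step which, taken at face value, breaks the argument for $\alpha\le 1/2$. You correctly compute the per-coordinate phase variation over a box as $\ll \zeta_j N^{e_j}\ll N^{\alpha-1-\eps}$, but then assert that summing over the $d$ coordinates gives ``$\ll N^{-\alpha+\eps'}$''; with that value your displayed lower bound $N^{\alpha}-N^{o(1)}\cdot N^{-\alpha+\eps'}\cdot N\ge \tfrac12 N^{\alpha}$ requires $1-\alpha+\eps'<\alpha$, i.e.\ $\alpha>(1+\eps')/2$, so it fails for small $\alpha$ and destroys the claimed uniformity. The correct chain, which your own first computation and your closing ``calibration'' paragraph already contain, is: the total phase variation over a box is $\ll N^{\alpha-1-\eps}$, each of the $N$ summands changes by $\ll |a_n|\,N^{\alpha-1-\eps}$, hence the sum changes by $\ll N^{\alpha-\eps+o(1)}=o(N^{\alpha})$ uniformly in $0<\alpha<1$. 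With that substitution the rest of your argument (the integration over the disjoint boxes and the rearrangement giving $\#\widetilde{\fR}\ll U N^{s(d)(1-2\alpha)+o(1)}$) goes through as stated.
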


\subsection{The measure of the set of large Weyl sums}

We need a  slightly more general version of~\cite[Corollary~3.8]{ChSh-New}. 
We recall that the result of~\cite[Corollary~3.8]{ChSh-New} is formulated with an $T = N^\alpha$ for a fixed
real $\alpha$, however examining the argument one can easily see that as Lemma~\ref{lem:counting},  it is uniform with respect to  $\alpha$
and thus works for an arbitrary  parameter $T\ge 1$.   
More precisely, we have:

\begin{lemma} 
\label{lem:Markov}
Let $1 \le T \le N$.  Then 
\begin{align*}
\lambda  (\{ \vx\in \T_k:~ \exists\, \vy \in \T_{d-k}  &\text{ with } |T_{\va, \bphi}(\vx, \vy; N)|\ge T \}   ) \\
 & \le N^{s(d)+\sigma_k(\bphi)+d-k +o(1)} T^{-2  s(d) - d +k }. 
\end{align*}
\end{lemma}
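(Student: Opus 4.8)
The plan is to deduce Lemma~\ref{lem:Markov} from the box-counting estimate in Lemma~\ref{lem:counting} by a covering argument together with the observation that, on each small box, a single value of $T_{\va,\bphi}$ controls all nearby values up to an acceptable error. First I would record the set whose measure we want to bound, namely
$$
\cE(T)=\{ \vx\in \T_k:~ \exists\, \vy \in \T_{d-k} \text{ with } |T_{\va, \bphi}(\vx, \vy; N)|\ge T \},
$$
and observe that $\cE(T)=\pi_{d,k}(\cL(T))$, where $\cL(T)=\{\vu\in\T_d:~|T_{\va,\bphi}(\vu;N)|\ge T\}$ and $\pi_{d,k}$ is the projection onto the first $k$ coordinates. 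Hence $\lambda_k(\cE(T))\le \lambda_k(\pi_{d,k}(\cL(T)))$, and the task reduces to covering $\pi_{d,k}(\cL(T))$ efficiently.

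Next I would set $\alpha$ by $N^\alpha\asymp T$ (more precisely $N^{\alpha}= T$, so $\alpha=\log T/\log N\in(0,1]$; for the boundary case $T=N$ one handles it trivially since then the claimed bound is $\ge 1$ after checking exponents, or one works with $\alpha$ slightly less than the nominal value and lets $\eps\to0$). Apply Lemma~\ref{lem:counting} with this $\alpha$: with $\zeta_j=1/\lceil N^{e_j+1+\eps-\alpha}\rceil$ the torus $\T_d$ is partitioned into $U=\prod_{j=1}^d\zeta_j^{-1}$ boxes, of which at most $\#\widetilde\fR\le U N^{s(d)(1-2\alpha)+o(1)}$ contain a point where $|T_{\va,\bphi}|\ge N^\alpha$. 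The key point is that $\cL(T)\subseteq\bigcup_{\cR\in\widetilde\fR}\cR$: any $\vu$ with $|T_{\va,\bphi}(\vu;N)|\ge T=N^\alpha$ lies in some box of $\fR$, and that box is by definition in $\widetilde\fR$. Therefore $\pi_{d,k}(\cL(T))$ is covered by the projections of the boxes in $\widetilde\fR$; the projection of each box has $k$-dimensional measure $\prod_{j=1}^k\zeta_j$, so
$$
\lambda_k(\cE(T))\le \#\widetilde\fR\cdot\prod_{j=1}^k\zeta_j\le U N^{s(d)(1-2\alpha)+o(1)}\prod_{j=1}^k\zeta_j.
$$

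Finally I would substitute $U=\prod_{j=1}^d\zeta_j^{-1}$, so $U\prod_{j=1}^k\zeta_j=\prod_{j=k+1}^d\zeta_j^{-1}$, and plug in $\zeta_j^{-1}=\lceil N^{e_j+1+\eps-\alpha}\rceil\le N^{e_j+1-\alpha+o(1)}$ (absorbing the ceiling and the $\eps$ into the $o(1)$, using $e_j+1-\alpha>0$). Since $\sum_{j=k+1}^d e_j=\sigma_k(\bphi)$ by~\eqref{eq:sigmak} and there are $d-k$ indices in the range $k+1\le j\le d$, we get $\prod_{j=k+1}^d\zeta_j^{-1}\le N^{\sigma_k(\bphi)+(d-k)(1-\alpha)+o(1)}$. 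Combining,
$$
\lambda_k(\cE(T))\le N^{s(d)(1-2\alpha)+\sigma_k(\bphi)+(d-k)(1-\alpha)+o(1)}=N^{s(d)+\sigma_k(\bphi)+d-k+o(1)}\,N^{-(2s(d)+d-k)\alpha},
$$
and since $N^\alpha=T$ this is exactly $N^{s(d)+\sigma_k(\bphi)+d-k+o(1)}T^{-2s(d)-d+k}$, as required. The main obstacle — really the only place needing care — is the uniformity in $\alpha$ (equivalently in $T$): one must be sure that the $o(1)$ in Lemma~\ref{lem:counting} and in handling the ceilings $\lceil N^{e_j+1+\eps-\alpha}\rceil$ does not depend on $\alpha$, which is precisely the uniform form of Lemma~\ref{lem:counting} as stated, and that the estimate $\zeta_j^{-1}\le N^{e_j+1-\alpha+o(1)}$ remains valid across the full range $1\le T\le N$ including when $e_j+1-\alpha$ is small but positive; this is why the hypothesis is $T\le N$ rather than something larger, and one may need to let $\eps\to0$ at the end to clean up the exponent.
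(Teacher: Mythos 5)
Your argument is correct and is essentially the paper's approach: the paper itself disposes of this lemma by citing \cite[Corollary~3.8]{ChSh-New} together with the uniformity-in-$\alpha$ remark, and the underlying proof there (also visible in the paper's own proof of Lemma~\ref{lem:Markov-Holder}) is exactly your covering of the superlevel set by the boxes of Lemma~\ref{lem:counting} followed by projecting and summing $\prod_{j=1}^k\zeta_j$ over $\widetilde\fR$. Your bookkeeping of the exponents, the reduction to $T=N^\alpha$, and the observation that the bound is trivial unless $\alpha$ is bounded away from $0$ all check out.
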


The following  result is  similar to the result of Lemma~\ref{lem:Markov},  with the change of $\widetilde{ \sigma}_k(\bphi)$ only. 
Let $\cV$ be a $k$-dimensional subspace of $\R^d$. Also recall that  $\widetilde{ \sigma}_k(\bphi)$ is given by~\eqref{eq:wsigma}. For the orthogonal projection map $\pi_{\cV}$, from~\cite[Corollary~3.11]{ChSh-New} we have the following.   
\begin{lemma}
\label{lem:Markov-V}
Let $1 \le T \le N$.  Then 
\begin{align*}
\lambda  (\{ \vx\in \cV:~ \sup_{\vu\in \T_d \,\cap \,\pi_{\cV}^{-1}(\vx)} & |T_{\va, \bphi}(\vu; N)|\ge T \}   ) \\
 & \le N^{s(d)+\widetilde{ \sigma}_k(\bphi)+d-k +o(1)} T^{-2  s(d) - d +k }. 
\end{align*}
\end{lemma}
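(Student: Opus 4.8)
The plan is to run the proof of Lemma~\ref{lem:Markov} almost verbatim, with the orthogonal projection $\pi_{d,k}$ replaced by $\pi_\cV$; the only genuinely new ingredient is an upper bound for the $k$-dimensional volume of the image of one grid box under $\pi_\cV$, and it is there that $\widetilde\sigma_k(\bphi)$ rather than $\sigma_k(\bphi)$ enters. So, for $1<T<N$ I would set $\alpha=\log T/\log N\in(0,1)$ (the endpoints $T=1$ and $T=N$ being disposed of trivially, the latter by continuity, since $\lambda$ of any projection of $[0,1)^d$ is $O(1)$ and $s(d)+\widetilde\sigma_k(\bphi)+d-k\ge 0$) and apply Lemma~\ref{lem:counting} with this $\alpha$. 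This produces a family $\widetilde\fR$ of boxes, with $\#\widetilde\fR\le U N^{s(d)(1-2\alpha)+o(1)}$, $U=\prod_{j=1}^d\zeta_j^{-1}$, $\zeta_j=1/\lceil N^{e_j+1+\eps-\alpha}\rceil$, such that every $\vu\in\T_d$ with $|T_{\va,\bphi}(\vu;N)|\ge T$ lies in some box of $\widetilde\fR$. Hence, identifying $\T_d$ with $[0,1)^d$, the set whose measure we must bound is contained in $\pi_\cV\bigl(\bigcup_{\cR\in\widetilde\fR}\cR\bigr)$ and therefore has $k$-dimensional measure at most $\#\widetilde\fR\cdot\lambda\bigl(\pi_\cV(\cR_0)\bigr)$ for any single box $\cR_0$, all boxes being translates of one another.

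Next I would estimate $\lambda(\pi_\cV(\cR_0))$. Since $\cR_0$ is a translate of $\sum_{j=1}^d[0,\zeta_j]\,\ve_j$, its image $\pi_\cV(\cR_0)$ is a translate of the zonotope in $\cV$ generated by the vectors $\zeta_j\pi_\cV(\ve_j)$, $j=1,\dots,d$. Its $k$-volume equals $\sum_{|I|=k}\bigl|\det(\zeta_i\pi_\cV(\ve_i))_{i\in I}\bigr|=\sum_{|I|=k}\bigl(\prod_{i\in I}\zeta_i\bigr)\bigl|\det(\pi_\cV(\ve_i))_{i\in I}\bigr|\le\binom dk\max_{|I|=k}\prod_{i\in I}\zeta_i$, because each $\pi_\cV(\ve_i)$ has Euclidean norm at most $1$ and Hadamard's inequality bounds each $k\times k$ minor by $1$. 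Since $\zeta_i=N^{-(e_i+1+\eps-\alpha)}$ is largest exactly when the degree $e_i$ is smallest, this maximum is attained at the $k$ smallest degrees: writing $r_1\le\cdots\le r_d$ for the sorted degree sequence and $\sigma_0(\bphi)=\sum_{j=1}^d\deg\varphi_j$, and using $\sum_{i=1}^k r_i=\sigma_0(\bphi)-\widetilde\sigma_k(\bphi)$ from~\eqref{eq:wsigma}, one gets $\max_{|I|=k}\prod_{i\in I}\zeta_i=N^{-\sum_{i=1}^k(r_i+1+\eps-\alpha)}=N^{-(\sigma_0(\bphi)-\widetilde\sigma_k(\bphi))-k(1-\alpha)+o(1)}$.

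Finally I would assemble the pieces: from $U=\prod_j\zeta_j^{-1}=N^{\sigma_0(\bphi)+d-d\alpha+o(1)}$,
\[
\#\widetilde\fR\cdot\lambda(\pi_\cV(\cR_0))\le N^{\sigma_0(\bphi)+d-d\alpha}\cdot N^{s(d)(1-2\alpha)}\cdot N^{-(\sigma_0(\bphi)-\widetilde\sigma_k(\bphi))-k(1-\alpha)+o(1)}=N^{s(d)+\widetilde\sigma_k(\bphi)+d-k-\alpha(2s(d)+d-k)+o(1)},
\]
and since $\alpha\log N=\log T$ the factor $N^{-\alpha(2s(d)+d-k)}$ is exactly $T^{-2s(d)-d+k}$, which is the assertion.

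I expect the box-volume estimate of the second paragraph to be the heart of the matter. For the axis projection $\pi_{d,k}$ of Lemma~\ref{lem:Markov} the image of a box is again a box, obtained by keeping the coordinates $k{+}1,\dots,d$, so one simply reads off $\sigma_k(\bphi)$; for an arbitrary $\cV\in\cG(d,k)$ one must range over all $k$-element coordinate subsets, and the anisotropy of the grid in Lemma~\ref{lem:counting} — which refines the high-degree directions most — means the worst such subset picks out the $k$ smallest degrees, producing $\widetilde\sigma_k(\bphi)$ and, in particular, a bound that cannot beat Lemma~\ref{lem:Markov} since $\widetilde\sigma_k(\bphi)\ge\sigma_k(\bphi)$. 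The tempting alternative of rotating coordinates so that $\cV$ becomes a coordinate plane fails, since then the grid of Lemma~\ref{lem:counting} is no longer axis-aligned and its box count is lost, so the zonotope computation seems to be the clean route. One should also take a little care with the passage between $\T_d$ and $[0,1)^d$ under the linear map $\pi_\cV$, but this only costs bounded factors, absorbed into $N^{o(1)}$.
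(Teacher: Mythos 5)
Your proof is correct, but be aware that the paper itself contains no proof of this statement: Lemma~\ref{lem:Markov-V} is imported wholesale from~\cite[Corollary~3.11]{ChSh-New}, so the only in-paper object to compare against is the proof of the H\"older analogue, Lemma~\ref{lem:Markov-Holder}. Your argument follows that same template --- cover the superlevel set of $|T_{\va,\bphi}(\cdot;N)|$ by the boxes of $\widetilde\fR$ from Lemma~\ref{lem:counting}, push the cover forward, and multiply $\#\widetilde\fR$ by the measure of a single image --- and the point where you genuinely go beyond it is exactly the right one. The H\"older proof only uses $\lambda(f(\cR))\ll(\diam\cR)^{\vartheta k}$, i.e.\ the $k$-th power of the \emph{largest} side length $\asymp N^{\alpha-1-\delta(\bphi)}$; specialised to $\pi_\cV$ (with $\vartheta=1$) this would yield the exponent $s(d)+\sigma_0(\bphi)+d-(\delta(\bphi)+1)k$, which is weaker than $s(d)+\widetilde{\sigma}_k(\bphi)+d-k$ as soon as the $k$ smallest degrees are not all equal to $\delta(\bphi)$. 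Your zonotope-plus-Hadamard computation, which bounds $\lambda(\pi_\cV(\cR))$ by $\binom{d}{k}$ times the product of the $k$ \emph{largest} side lengths and hence replaces $k\,\delta(\bphi)$ by $r_1+\cdots+r_k=\sigma_0(\bphi)-\widetilde{\sigma}_k(\bphi)$, is precisely the refinement that produces $\widetilde{\sigma}_k(\bphi)$, and the remaining bookkeeping ($U=N^{\sigma_0(\bphi)+d-d\alpha+o(1)}$, the box count from Lemma~\ref{lem:counting}, the substitution $T=N^\alpha$ with a limiting argument at the endpoints, uniformity in $\alpha$, and letting $\eps\to0$ at the end as in the proof of Lemma~\ref{lem:Markov-Holder}) all checks out. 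Your closing remarks --- that the image of a box under $\pi_{d,k}$ is again a box, recovering Lemma~\ref{lem:Markov}, and that $\widetilde{\sigma}_k(\bphi)\ge\sigma_k(\bphi)$ so nothing is gained over Lemma~\ref{lem:Markov} for coordinate subspaces --- are also accurate.
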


We now turn to $\vartheta$-H\"older functions $f:R^d\rightarrow \R^k$. Indeed,  applying the similar methods to the proofs
 of~\cite[Corollaries~3.8 and~3.11]{ChSh-New} we obtain Lemma~\ref{lem:Markov-Holder} below. Since  $\vartheta$-H\"older functions 
 do not appear in~\cite{ChSh-New},  for the sake of completeness, we  outline  the proof. 

\begin{lemma}
\label{lem:Markov-Holder}
Let $f:\R^d\rightarrow \R^k$ be a $\vartheta$-H\"older function.      Then for $1 \le T \le N$ we have 
\begin{align*}
\lambda  (\{ \vx\in \R^k:~ \sup_{\vu\in \T_d \,\cap \,f^{-1}(\vx)} & |T_{\va, \bphi}(\vu; N)|\ge T \}   ) \\
 & \le N^{s(d)+\sigma_0(\bphi)+d-(\delta(\bphi) +1) \vartheta k+o(1)} T^{-2  s(d)-d +\vartheta k}. 
\end{align*}
\end{lemma}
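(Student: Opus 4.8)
The plan is to adapt the proof strategy of Lemmas~\ref{lem:Markov} and~\ref{lem:Markov-V} (that is, \cite[Corollaries~3.8 and~3.11]{ChSh-New}) to the $\vartheta$-H\"older setting, the only change being in how one controls the measure of the union of fibres $f^{-1}(\vx)$ over $\vx$ ranging in a ball intersecting a single box of the partition. First I would set $T = N^\alpha$ (the whole argument being uniform in $\alpha$, as noted for Lemma~\ref{lem:counting}) and invoke the box-counting estimate of Lemma~\ref{lem:counting}: with the mesh sizes $\zeta_j = 1/\lceil N^{e_j+1+\eps-\alpha}\rceil$ where $e_j = \deg\varphi_j$, the number of boxes $\cR\in\fR$ containing some $\vu$ with $|T_{\va,\bphi}(\vu;N)| \ge N^\alpha$ is at most $U N^{s(d)(1-2\alpha)+o(1)}$, where $U = (\prod_{j=1}^d \zeta_j)^{-1}$. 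Since $|T_{\va,\bphi}|$ cannot change by more than $N^{o(1)}$ across a box whose $j$-th side has length $\zeta_j \ll N^{-e_j-1+\alpha-\eps}$ (using $a_n = n^{o(1)}$ and the mean-value/derivative bound that underlies Lemma~\ref{lem:counting}), the set of $\vu\in\Tor$ with $|T_{\va,\bphi}(\vu;N)|\ge N^\alpha$ is covered by $\widetilde\fR$, a union of at most $U N^{s(d)(1-2\alpha)+o(1)}$ boxes.

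Next I would push this box cover forward through $f$. The key geometric point is that if $\cR$ is one of our boxes, then $f(\cR)$ has diameter $\ll (\diam \cR)^\vartheta$ by the $\vartheta$-H\"older property, and $\diam \cR \ll \max_j \zeta_j$; to get the cleanest bound one uses that the smallest side is $\zeta_j$ with $e_j = \delta(\bphi)$, but more carefully one bounds $\diam \cR$ by the full diagonal. Actually the sharp way — and the one that produces the exponent $\sigma_0(\bphi) + d - (\delta(\bphi)+1)\vartheta k$ — is to observe that $f(\cR)$ is contained in a ball in $\R^k$ of radius $\ll N^{(-\delta(\bphi)-1+\alpha-\eps)\vartheta}$, hence of $k$-dimensional measure $\ll N^{(-\delta(\bphi)-1+\alpha)\vartheta k + o(1)}$. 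Therefore the set $\{\vx\in\R^k : \sup_{\vu\in\T_d\cap f^{-1}(\vx)}|T_{\va,\bphi}(\vu;N)|\ge N^\alpha\}$, which is contained in $\bigcup_{\cR\in\widetilde\fR} f(\cR)$, has measure at most
$$
\# \widetilde\fR \cdot N^{(-\delta(\bphi)-1+\alpha)\vartheta k + o(1)} \le U N^{s(d)(1-2\alpha)+o(1)} \cdot N^{(-\delta(\bphi)-1+\alpha)\vartheta k}.
$$
Now I substitute $U = \prod_{j=1}^d \lceil N^{e_j+1+\eps-\alpha}\rceil = N^{\sigma_0(\bphi) + d - \alpha d + o(1)}$, where $\sigma_0(\bphi) = \sum_{j=1}^d \deg\varphi_j$ and we used that there are $d$ polynomials, so the $+1$'s contribute $d$ and the $-\alpha$'s contribute $-\alpha d$. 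Collecting the powers of $N$ gives exponent
$$
\sigma_0(\bphi) + d - \alpha d + s(d)(1-2\alpha) + (-\delta(\bphi)-1+\alpha)\vartheta k + o(1),
$$
and grouping the terms linear in $\alpha$ yields coefficient $-d - 2s(d) + \vartheta k = -(2s(d)+d-\vartheta k)$, while the constant part is $\sigma_0(\bphi) + d + s(d) - (\delta(\bphi)+1)\vartheta k$. Writing $T = N^\alpha$ back, this is precisely $N^{s(d)+\sigma_0(\bphi)+d-(\delta(\bphi)+1)\vartheta k + o(1)}\, T^{-(2s(d)+d-\vartheta k)}$, as claimed.

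The step I expect to be the main obstacle — or at least the one requiring the most care — is the geometric covering estimate: justifying that $f(\cR)$ really sits in a ball whose radius is governed by the \emph{smallest} box-side $\zeta_j$ raised to the power $\vartheta$, rather than by the full diameter of $\cR$. In fact the honest bound is $\diam f(\cR) \ll (\diam \cR)^\vartheta$ with $\diam \cR \asymp \zeta_1$ (the side corresponding to the lowest-degree polynomial $\varphi_1$, which has the \emph{largest} mesh size, hence dominates the diameter); this is why $\delta(\bphi) = \min_i \deg\varphi_i$ appears. One must check that this largest side is the relevant one and that the H\"older inequality is applied to the Euclidean diameter of the box in $\R^d$. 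A secondary point worth stating carefully is the uniformity in $\alpha$: since we only ever use Lemma~\ref{lem:counting} and the box-oscillation bound, both of which the excerpt records as uniform in $\alpha$, the resulting inequality holds for all $1\le T\le N$ simultaneously, not just for $T$ a fixed power of $N$. With those two points in hand the remainder is the bookkeeping of exponents carried out above, and for the precise constants I would simply refer to the corresponding computation in \cite{ChSh-New}, indicating the single modification.
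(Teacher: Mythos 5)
Your proposal is correct and follows the paper's own proof essentially verbatim: cover the large-value set by $\bigcup_{\cR\in\widetilde\fR} f(\cR)$ using Lemma~\ref{lem:counting}, bound $\lambda(f(\cR))\ll(\diam\cR)^{\vartheta k}$ with $\diam\cR\ll N^{\alpha-1-\delta(\bphi)}$ via the H\"older property, and multiply by $\#\widetilde\fR$, with the same exponent bookkeeping. (Your passing remark that the side with $e_j=\delta(\bphi)$ is the \emph{smallest} is a slip --- it is the largest mesh size and hence dominates the diameter, as you yourself correctly use in the final computation --- but this does not affect the argument.)
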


\begin{proof}  First of all suppose that $T=N^{\alpha}$ for some $0<\alpha<1$. We fix some sufficiently small   $\eps>0$ and define  the set 
$$
\fU = \bigcup_{\cR\in \widetilde \fR} \cR. 
$$
For $\cA\subseteq \R^d$ denote $f(\cA)=\{f(\vx): \vx\in \cA\}$. Observe that 
\begin{equation}\label{eq:cover-holder}
\begin{split}
\{ \vx\in \R^k:~ \sup_{\vu\in \T_d \,\cap \,f^{-1}(\vx)}  |T_{\va, \bphi}&(\vu; N)|\ge T \} \\
& \subseteq f\( \fU \)\subseteq \bigcup_{\cR\in \widetilde \fR} f(\cR), 
\end{split}
\end{equation}
where $ \widetilde \fR$ is as in Lemma~\ref{lem:counting}.

Since $f$ is  $\vartheta$-H\"older, for any $\cA\subseteq \R^d$ we obtain 
$$
\lambda(f(\cA))\ll \( \diam\cA\)^{\vartheta k}, 
$$
where $\diam \cA=\sup\{\|\va-\vb\|:~\va, \vb \in \cA\}$.  For each $\cR\in \widetilde\fR$, by~\eqref{eq:zetaj} we have 
$$
\diam \cR\ll N^{\alpha-1-\delta(\bphi)}.
$$
Combining with Lemma~\ref{lem:counting} and the estimate~\eqref{eq:cover-holder}, we derive 
\begin{align*}
\lambda(\{ \vx\in \R^k:~ \sup_{\vu\in \T_d \,\cap \,f^{-1}(\vx)}  &|T_{\va, \bphi}(\vu; N)|\ge T \}) \\
& \le \# \widetilde \fR \(\diam \cR\)^{\vartheta k}\\
&\ll  U N^{s(d)(1-2\alpha)}N^{(\alpha-\delta(\bphi)-1-\varepsilon)\vartheta k}
\end{align*}
Since  $\eps$ is arbitrary, recalling the value of $U$, we now obtain 
\begin{align*}
\lambda(\{ \vx\in \R^k:~& \sup_{\vu\in \T_d \,\cap \,f^{-1}(\vx)}  |T_{\va, \bphi}(\vu; N)|\ge T \}) \\
&\quad \le N^{s(d)+\sigma_0(\bphi)+d-(\delta(\bphi) +1) \vartheta k+o(1)} N^{-2  s(d)\alpha-d \alpha+\vartheta k \alpha}.
\end{align*}
Recalling that $T=N^{\alpha}$, we now obtain the desired result by taking $T=N^{\alpha}$. 
\end{proof}

\subsection{Discrepancy and exponential sums}
We recall  the classical {\it Erd\H{o}s--Tur\'{a}n inequality\/} (see, for instance,~\cite[Theorem~1.21]{DrTi}).

\begin{lemma}
\label{lem:ET}
Let $\xi_n$, $n\in \N$,  be a sequence in $[0,1)$. Then for the discrepancy $D_N$ given by~\eqref{eq:Discr} and any $G\in \N$, we have  
$$
D_N \le 3 \left( \frac{N}{G+1} + \sum_{g=1}^{G}\frac{1}{g} \left| \sum_{n=1}^{N} \e(g \xi_n) \right | \right).
$$
\end{lemma}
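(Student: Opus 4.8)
The plan is to prove the inequality by the classical Fourier-analytic comparison of the indicator function of an interval against trigonometric majorants and minorants. Fix an interval $I=(a,b)\subseteq[0,1)$ and write $\chi_I$ for its $1$-periodic indicator. Since $\#\{1\le n\le N:~\xi_n\in(a,b)\}=\sum_{n=1}^N\chi_I(\xi_n)$, the discrepancy $D_N$ in~\eqref{eq:Discr} is precisely the supremum over all such $I$ of
$$
\left|\sum_{n=1}^N\chi_I(\xi_n)-(b-a)N\right|,
$$
so it suffices to bound this quantity uniformly in $I$ and then take the supremum. First I would sandwich $\chi_I$ between two trigonometric polynomials of degree at most $G$, evaluate them at the points $\xi_n\in[0,1)$, and expand the result into the exponential sums $\sum_n\e(g\xi_n)$ appearing on the right-hand side.

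The key input, and the genuine obstacle, is the construction of the \emph{Selberg majorant and minorant}: trigonometric polynomials $S_G^{+}$ and $S_G^{-}$ of degree at most $G$ with
$$
S_G^{-}(x)\le \chi_I(x)\le S_G^{+}(x)\qquad(x\in[0,1)),
$$
whose mean values satisfy $\int_0^1 S_G^{\pm}(x)\,dx=(b-a)\pm\tfrac{1}{G+1}$, and whose Fourier coefficients obey
$$
\left|\widehat{S_G^{\pm}}(m)\right|\le \frac{1}{G+1}+\frac{1}{\pi|m|}\qquad(1\le|m|\le G).
$$
These are the Beurling--Selberg extremal functions; their construction from the Beurling extremal majorant of the signum function, followed by periodisation, is where all the real work lies, and for the purposes of the present paper one simply imports them (as is done implicitly in~\cite[Theorem~1.21]{DrTi}). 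The decay bound on the Fourier coefficients is exactly what produces the weights $1/g$ in the final estimate.

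With these polynomials in hand the remaining steps are routine bookkeeping. Using the majorant,
$$
\sum_{n=1}^N\chi_I(\xi_n)\le \sum_{n=1}^N S_G^{+}(\xi_n)=\sum_{|m|\le G}\widehat{S_G^{+}}(m)\sum_{n=1}^N\e(m\xi_n).
$$
I would isolate the term $m=0$, which equals $N\int_0^1 S_G^{+}=(b-a)N+\tfrac{N}{G+1}$, and bound the terms with $1\le|m|\le G$ by the triangle inequality, pairing $m$ with $-m$ and using $\bigl|\sum_n\e(-g\xi_n)\bigr|=\bigl|\sum_n\e(g\xi_n)\bigr|$. The coefficient bound then gives a contribution at most $\bigl(2+\tfrac2\pi\bigr)\sum_{g=1}^G\frac1g\bigl|\sum_n\e(g\xi_n)\bigr|$, where the $1/(G+1)$ part of each coefficient is absorbed using $1/(G+1)\le 1/g$ for $1\le g\le G$. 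The minorant $S_G^{-}$ yields the matching lower bound for $\sum_n\chi_I(\xi_n)-(b-a)N$. Subtracting $(b-a)N$, taking absolute values, and bounding both $1$ and $2+\tfrac2\pi\approx2.637$ by $3$ gives the asserted inequality with the clean constant $3$; a final supremum over all intervals $I$ completes the proof. Beyond the extremal construction, the only point to check is the uniformity of the constants in $I$, which is automatic since all bounds above are independent of $a$ and $b$.
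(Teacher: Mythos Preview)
Your sketch is correct and is essentially the standard Beurling--Selberg argument. Note, however, that the paper does not actually prove this lemma: it is merely quoted as the classical Erd\H{o}s--Tur\'{a}n inequality with a reference to~\cite[Theorem~1.21]{DrTi}, so there is no ``paper's own proof'' to compare against --- your outline is precisely the kind of argument that reference contains.
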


\section{Proofs of Mean Value Theorems for Exponential Sums}

\subsection{Proof of Theorems~\ref{thm:general},~\ref{thm:V-projection},~\ref{thm:f-projection}}

Theorems~\ref{thm:general},~\ref{thm:V-projection} and~\ref{thm:f-projection} follows by combining
Lemmas~\ref{lem:Markov},~\ref{lem:Markov-V} and~\ref{lem:Markov-Holder}, respectively, 
with Lemma~\ref{lem:level-set}   below.

\begin{lemma}
\label{lem:level-set}
Let $N$ be a positive large number and $F:\T_k\rightarrow [0,N]$ be a function. Suppose that there exists positive constants $a<b$ such that for any $1\le T\le N$,
\begin{equation}
\label{eq:level-set}
\lambda(\{\vx\in \T_k:~F(\vx)\ge T\})\le N^aT^{-b}.
\end{equation}
Then for any positive $\rho\le b$,
$$
\int_{T_k} F(\vx)^{\rho} d\vx \ll  N^{\frac{\rho a}{b}}\log N.
$$
\end{lemma}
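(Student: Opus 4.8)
The plan is to prove Lemma~\ref{lem:level-set} by the standard "layer-cake" (distribution function) approach, splitting the integral at a threshold chosen to balance the trivial bound against the bound~\eqref{eq:level-set}. First I would write
$$
\int_{\T_k} F(\vx)^\rho \, d\vx = \rho \int_0^N T^{\rho-1} \lambda(\{\vx\in \T_k:~F(\vx)\ge T\}) \, dT,
$$
using that $F$ takes values in $[0,N]$ and that $\lambda(\T_k)=1$ (so the tail beyond $T=N$ contributes nothing). Note the range $T\le 1$ contributes at most $\rho\int_0^1 T^{\rho-1}\,dT = 1$, which is harmless, so the real work is on $[1,N]$.

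On $[1,N]$ I would split at a parameter $T_0\in[1,N]$ to be optimized. For $T\le T_0$ I use the trivial bound $\lambda(\{F\ge T\})\le 1$, giving a contribution $\ll T_0^\rho$. For $T_0\le T\le N$ I insert~\eqref{eq:level-set}, getting
$$
\rho\int_{T_0}^N T^{\rho-1} N^a T^{-b}\, dT = \rho N^a \int_{T_0}^N T^{\rho-1-b}\, dT.
$$
Here the hypothesis $\rho\le b$ is what makes the exponent $\rho-1-b < -1$ (or equal to $-1$), so this integral is $\ll N^a T_0^{\rho-b}$ when $\rho<b$, and $\ll N^a \log N$ when $\rho=b$ (the logarithm being exactly why the statement carries a $\log N$ factor). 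Adding the two pieces gives $\ll T_0^\rho + N^a T_0^{\rho-b}\log N$, and choosing $T_0 = N^{a/b}$ balances the two terms — both become $N^{\rho a/b}$ up to the $\log N$ — which yields the claimed bound $\ll N^{\rho a/b}\log N$. One should also check the endpoint cases $T_0=N^{a/b}\le 1$ and $T_0\ge N$, which only improve the bound since $F\le N$ forces $\int F^\rho\le N^\rho$ trivially and these degenerate regimes are absorbed.

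There is no serious obstacle here; the only point requiring a little care is the role of $\rho\le b$. The case $\rho=b$ must be handled separately because the integral $\int_{T_0}^N T^{-1}\,dT = \log(N/T_0)$ genuinely produces the logarithm, and for $\rho<b$ one must verify that the choice $T_0=N^{a/b}$ indeed lies in (or can be clipped to) $[1,N]$ and that $N^a T_0^{\rho-b} = N^{a+\frac{a}{b}(\rho-b)} = N^{\rho a/b}$. I would present the $\rho=b$ computation explicitly to justify the stated $\log N$ and remark that for $\rho<b$ one could even drop the logarithm, but keeping it uniformly stated is convenient for the applications to Theorems~\ref{thm:general},~\ref{thm:V-projection} and~\ref{thm:f-projection}, where this $\log N$ is absorbed into the $N^{o(1)}$ factor anyway.
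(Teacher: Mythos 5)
Your proof is correct and essentially the same as the paper's: both are layer-cake arguments that split at the balance point $T_0=R=N^{a/b}$, use the trivial bound $\lambda(\{F\ge T\})\le 1$ below it and the hypothesis~\eqref{eq:level-set} above it. The only (cosmetic) difference is that the paper realizes this via a dyadic decomposition of $\T_k$ into level sets $\cX_\ell=\{2^{\ell-1}R<F\le 2^{\ell}R\}$ with $O(\log N)$ values of $\ell$, whereas you use the continuous distribution-function identity $\int F^\rho\,d\vx=\rho\int_0^N T^{\rho-1}\lambda(\{F\ge T\})\,dT$.
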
 
\begin{proof}
Let $R=N^{a/b}$. Note that for $T>R$ we have a nontrivial estimate in~\eqref{eq:level-set}. We partition $ \T_k$ into sets $\cX_\ell$, $\ell\in \N$,  where
$$
\cX_0=\{\vx\in \T_k:~F(\vx)\le R\},
$$ 
and for $\ell\ge 1$, 
$$
\cX_\ell=\{\vx\in \T_k:~2^{\ell-1} R< F(\vx)\le 2^{\ell} R\}.
$$ 
By our assumption~\eqref{eq:level-set} we have 
$$
\lambda(\cX_{\ell})\ll N^{a}(2^{\ell} R)^{-b}.
$$
Clearly for some $L=O(\log N)$ we have $\cX_{\ell}=\emptyset$. Therefore, 
\begin{align*}
\int_{\T_k} F(\vx)^{\rho} d\vx &=\sum_{\ell=0}^{L} \int_{\cX_{\ell}} F(\vx)^{\rho} d\vx\\
&\ll R^{\rho} +\sum_{\ell=1}^{L} (2^{\ell }R)^{\rho} N^{a}(2^{\ell} R)^{-b}\\
&\ll R^\rho +N^a R^{\rho-b}\sum_{\ell=1}^{L}2^{\ell(\rho-b)}.
\end{align*}
By the choice of $R=N^{a/b}$ and the condition that $\rho\le b$ we obtain the desired bound.
\end{proof}

\subsection{Proof of Corollary~\ref{cor:MVT-Short}}

For $K\in \Z$, recall that  Weyl sums over short intervals  are defined as follows 
$$
S_d(\vu; K,N) = \sum_{n=K+1}^{K+N} \e(u_1n+\ldots+u_d n^d). 
$$
 We write 
$$
S_d(\vu; K,N)=\sum_{n=1}^N \e(u_1(n+K)+\ldots+u_d(n+K)^d).
$$
We observe that in the  polynomial identity  
\begin{align*}
u_1(T+K)+\ldots & +u_d (T+K)^d \\
& = v_0+v_1T+\ldots+v_{d-1}T^{d-1}+u_d T^d \in \R[T], 
\end{align*}
where for $j=0, 1, \ldots, d-1$, each $v_j$, depends only on $u_1, \ldots, u_{d}$ and $K$.  It follows that 
\begin{equation}
\label{eq:shift}
\sup_{K\in \Z} |S_d(\vu; K, N)|\le \sup_{(v_1, \ldots, v_{d-1})\in \T_{d-1}} |S_d((v_1, \ldots, v_{d-1}, u_d); N)|.
\end{equation}
Note that for any fixed $u_d$ for any   $(u_1, \ldots, u_{d-1})\in T_{d-1}$ the estimate~\ref{eq:shift} holds for $\vu=(u_1, \ldots, u_d)$. Thus we obtain 
\begin{align*}
\int_{T_d} & \sup_{K\in \Z} |S_d(\vu; K, N)|^\rho d\vu\\
&\qquad =\int_{0}^1\left (\int_{T_{d-1}} \sup_{K\in \Z}|S_d(\vu; K, N)|^{\rho}du_1\ldots du_{d-1}\right) du_d\\
&\qquad \le \int_{0}^1\sup_{(v_1, \ldots, v_{d-1})\in \T_{d-1}} |S_d((v_1, \ldots, v_{d-1}, u_d); N)|^\rho d u_d. 
\end{align*} 
Hence Theorem~\ref{thm:general}, applied $k=1$, $\varphi_1(T)=T^d$,   $\varphi_i(T)=T^{i-1}$ for $i=2, \ldots, d$, 
and thus with $\sigma_{1} (\bphi) = d(d-1)/2$,  
yields the desired bound. 


\section{Proofs of Mean Value Theorems for the Discrepancy} 

\subsection{Proof of Theorem~\ref{thm:discrepancy}}

For any $\vx\in \T_k, \vy\in \T_{d-k}$, by Lemma~\ref{lem:ET} for  any $G\in \N$ we obtain 
$$
D_{\bphi}(\vx, \vy; N)\ll \frac{N}{G}+\sum_{g=1}^{G}\frac{1}{g} \left |T_{\bphi}(g\vx, g\vy; N)\right |,
$$
and therefore 
\begin{equation}
\label{eq:ET}
\sup_{\vy\in \T_{d-k}}D_{\bphi}(\vx, \vy; N)\ll \frac{N}{G}+\sum_{g=1}^{G}\frac{1}{g}  \sup_{\vy\in \T_{d-k}}\left |T_{\bphi}(g\vx, \vy; N)\right |.
\end{equation}

We now use the following  invariant property of Lebesgue measure on a torus.

\begin{lemma}
\label{lem:invairant}
Let $F:\T_k\rightarrow [0, N]$ be a continuous   function. Then for any  integer $g\ne 0$ we have 
\begin{equation}
\label{eq:invairiant}
\int_{\T_k} F(g\vx) d\vx=\int_{\T_k} F(\vx) d\vx.
\end{equation}
\end{lemma}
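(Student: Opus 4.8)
The plan is to prove the integral invariance \eqref{eq:invairiant} by the standard change of variables on the torus, handling the non-invertibility of the multiplication-by-$g$ map by a covering (fibering) argument. First I would reduce to $g \ge 1$, since for $g < 0$ the map $\vx \mapsto g\vx$ on $\T_k$ differs from $\vx \mapsto |g|\vx$ by the measure-preserving involution $\vx \mapsto -\vx$, so it suffices to treat positive $g$.

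Next, for $g \ge 1$ consider the map $m_g : \T_k \to \T_k$, $m_g(\vx) = g\vx \bmod \Z^k$. I would partition the torus $\T_k = (\R/\Z)^k$ into the $g^k$ translated subcubes
$$
\cX_{\vec{j}} = \prod_{i=1}^k \left[ \frac{j_i}{g}, \frac{j_i+1}{g}\right), \qquad \vec{j} = (j_1,\ldots,j_k) \in \{0,1,\ldots,g-1\}^k,
$$
each of measure $g^{-k}$. On each $\cX_{\vec{j}}$ the restriction of $m_g$ is a bijection onto $\T_k$ (up to a null set), given by the affine map $\vx \mapsto g\vx - \vec{j}$, whose Jacobian has absolute value $g^k$. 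Hence by the change of variables formula, for each $\vec{j}$,
$$
\int_{\cX_{\vec{j}}} F(g\vx)\, d\vx = \int_{\cX_{\vec{j}}} F(g\vx - \vec{j})\, d\vx = g^{-k}\int_{\T_k} F(\vx)\, d\vx,
$$
using that $F$ is $\Z^k$-periodic so $F(g\vx) = F(g\vx - \vec{j})$. Summing over the $g^k$ cubes gives $\int_{\T_k} F(g\vx)\,d\vx = \int_{\T_k} F(\vx)\,d\vx$, which is \eqref{eq:invairiant}. Continuity of $F$ (indeed just measurability and boundedness, which hold here since $F$ takes values in $[0,N]$) is more than enough to make all the integrals well-defined and to justify the change of variables.

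The only genuine point requiring care — and the place I would be most careful — is the bookkeeping that the pieces $m_g(\cX_{\vec{j}})$ cover $\T_k$ exactly once up to measure zero, i.e. that $m_g$ is essentially a $g^k$-to-one covering; once that is set up, everything is a routine application of Fubini/change of variables on Euclidean cubes. An equivalent and perhaps slicker route is to note that $m_g$ pushes Lebesgue measure forward to itself (this is immediate for $k=1$ from the interval decomposition, and the general case follows by taking products), and then $\int F(g\vx)\,d\vx = \int F\, d((m_g)_*\lambda) = \int F\, d\lambda$; I would likely present whichever of these two formulations reads most cleanly.
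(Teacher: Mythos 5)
Your proof is correct, and it takes a more self-contained route than the paper's. The paper does not compute anything directly: it quotes (from Wooley's paper) the fact that $\lambda(\{\vx\in \T_k:~g \vx \in \cA\})=\lambda(\cA)$ for every Borel set $\cA$ and integer $g\ne 0$, restates this as the identity \eqref{eq:invairiant} for $F={\bf 1}_{\cA}$, extends by linearity to finite linear combinations of characteristic functions, and then passes to continuous $F$ by approximation --- this last step is where the continuity hypothesis is actually used in the paper's argument. You instead prove the underlying covering fact from scratch: the reduction to $g\ge 1$ via the measure-preserving involution $\vx\mapsto -\vx$, the partition of $\T_k$ into $g^k$ subcubes of side $1/g$ on each of which $\vx\mapsto g\vx$ agrees (modulo $\Z^k$) with an affine bijection onto the torus, and the change of variables with Jacobian $g^k$. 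This buys you independence from the external citation and shows, as you correctly observe, that continuity is superfluous here --- boundedness and measurability suffice --- whereas the paper's simple-function approximation is what makes its write-up lean on continuity. Both arguments are sound; yours amounts to a direct proof of the measure-invariance statement that the paper imports, followed by the same conclusion.
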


\begin{proof} 
For any Borel set $\cA\subseteq \T_k$ and any  integer  $g\neq 0$,  we have  (for a proof see~\cite[Section~3]{Wool3})
$$
\lambda(\{\vx\in \T_k:~g \vx \in \cA\})=\lambda(\cA),
$$
which is the same as the identity
$$
\int_{\T_k} {\bf 1}_{\cA}(g\vx) d\vx=\int_{\T_k} {\bf 1}_{\cA}(\vx) d\vx, 
$$
where ${\bf 1}_{\cA}$ is the characteristic function of $\cA$. 
Thus~\eqref{eq:invairiant} holds when $F={\bf 1}_{\cA}$. It follows that the identity~\eqref{eq:invairiant} still holds when $F$ is a finite linear combination of characteristic functions, that is 
$$
F(\vx)=\sum_{j=1}^J a_j {\bf 1}_{\cA_j}(\vx), 
$$
for sets  $\cA_j\subseteq \T_k$, $j =1, \ldots, J$. 
Since any continuous function can be arbitrary  approximated by a  finite  linear combination of  such functions,  the desired identity follows.
\end{proof}

Here $\| f(\vx) \|_\rho$ denote the $L^\rho(\T_k)$-norm of a function $f$ on $\T_k$. 
Then by~\eqref{eq:ET} and  the Minkowski inequality,
\begin{equation}
\label{eq:Discr-rho}
\left \|\sup_{\vy\in \T_{d-k}} D_{\bphi}(\vx, \vy; N) \right \|_{\rho}\ll \frac{N}{G}+\sum_{g=1}^{G}\frac{1}{g} \left \| \sup_{\vy\in \T_{d-k}}\left |T_{\bphi}(g\vx, \vy; N)\right |\right \|_{\rho}. 
\end{equation}

For any positive integer $g\ne 0$,  Lemma~\ref{lem:invairant} implies 
$$
\left \| \sup_{\vy\in \T_{d-k}}\left |T_{\bphi}(g\vx, \vy; N)\right |\right \|_{\rho}
= \left \| \sup_{\vy\in \T_{d-k}}\left |T_{\bphi}(\vx, \vy; N)\right |\right \|_{\rho}, 
$$
which together with~\eqref{eq:Discr-rho}    and Theorem~\ref{thm:general} yileds
\begin{align*}
\left \|\sup_{\vy\in \T_{d-k}} D_{\bphi}(\vx, \vy; N) \right \|_{\rho} &\ll \frac{N}{G}+ \left \| \sup_{\vy\in \T_{d-k}}\left |T_{\bphi}(\vx, \vy; N)\right |\right \|_{\rho} \log G \\
&\ll \frac{N}{G}+ N^{\mu(\bphi, k)+o(1)}\log G. 
\end{align*}
Choosing $G=N^{1-\mu(\bphi, k)}$, we derive the desired  result.

\subsection{Proof of Corollary~\ref{cor:MVT-Short-D}}

Recall that  $D_d(\vu;K, N)$  is  the discrepancy  of the sequence  of fractional parts
$$
\{u_1n+\ldots+u_d n^d\}, \qquad  n=K+1, \ldots, K+N.
$$
Clearly this sequence is same as  
$$
\{u_1(n+K)+\ldots+u_d (n+K)^d\}, \qquad  n=1, \ldots, N, 
$$
and thus as  before, see~\eqref{eq:shift}, we see that this sequence is the same as   
$$
\{v_0+v_1n+\ldots+v_{d-1}n^{d-1}+u_d n^d\}, \qquad  n=1, \ldots, N, 
$$
where for $j=0, 1, \ldots, d-1$, each $v_j$, depends only on $u_1, \ldots, u_{d}$ and $K$.  Furthermore let $\vu^*=(v_1, \ldots, v_{d-1}, u_d)$. 
It is not hard to see that the influence of the discarded constant term $v_0$ can be absorbed in a constant factor and does not 
change the order of magnitude of the discrepancy. More, precisely,  we have   
$$
D_d(\vu^*;  N)\ll D_d(\vu; K, N)\ll D_d(\vu^*; N),
$$
where the implied constant is  absolute.  It follows that 
$$
\sup_{K\in \Z} D_d(\vu; K, N)\ll \sup_{(v_1, \ldots, v_{d-1})\in \T_{d-1}} D_d((v_1, \ldots, v_{d-1}, u_d), N). 
$$

Using the similar arguments as in the proof of Corollary~\ref{cor:MVT-Short} and applying  Theorem~\ref{thm:discrepancy}, with  $k=1$, $\varphi_1(T)=T^d$,  $\varphi_i(T)=T^{i-1}$ for $i=2, \ldots, d$, 
and thus with $\sigma_{1} (\bphi) = d(d-1)/2$,  
we obtain the desired bound.

%
%
%
%

\section{Comments and Open Questions}

\subsection{Special cases of $d=2$ and $d=3$}  
We show that for  special  cases of $d=2$ and $d=3$ for the moments
\begin{equation}
\begin{split}
\label{eq:special}
& \cM_{2,2} (N) =  \int_{0}^{1} \sup_{y\in [0,1]} \left|\sum_{n=1}^{N}\e(xn^2+yn)\right|^2 dx, \\
&  \cM_{3,4} (N) = \int_{0}^{1} \sup_{y,z\in [0,1]} \left|\sum_{n=1}^{N}\e(xn^3+yn^2 + zn)\right|^4 dx,
\end{split} 
\end{equation} 
 we have  better bounds (nearly optimal) than the bounds in Theorem~\ref{thm:general}.  
 
Applying Theorem~\ref{thm:general} with $d=2$, $k=1$, $ \sigma_k(\bphi)=1$,  $\rho=2$ 
and  with $d=3$, $k=1$, $ \sigma_k(\bphi)=3$,  $\rho=4$  
we derive that 
$$
 \cM_{2,2} (N) \le N^{10/7+o(1)} \mand  \cM_{3,4} (N) \le  N^{22/7+o(1)} ,
$$
respectively. 
On the other hand,  we have the lower bounds
$$
 \cM_{2,2} (N) \ge \int_{0}^{1} \int_{0}^{1} \left|\sum_{n=1}^{N}\e(xn^2+yn)\right|^2 dy dx = N, 
$$
and 
\begin{align*}
\cM_{3,4} (N) & \ge \int_{0}^{1} \int_{0}^{1}  \int_{0}^{1}  \left|\sum_{n=1}^{N}\e(xn^3+yn^2 + zn)\right|^4  dz dy dx\\
& \ge 2N^2 + O(N),
\end{align*}
see, for example,~\cite[Section~1]{Wool2}. 

Now we use a different way to bound the square mean values~\eqref{eq:special} and  reduce
 the gap between the above lower and upper bounds. 

For $d=2$ we need  the following  well known inequality, for a proof see~\cite[Inequality~(8.8)]{IwKow}:
$$
\sup_{y\in [0,1]} \left|\sum_{n=1}^{N}\e(xn^2+yn)\right|^2\ll N+\sum_{h=1}^{N}\min\left \{\frac{1}{\|2hx\|}, N\right \}, 
$$
where   $\|x\|=\min\{|x-n|:~n\in \Z\}$  denotes  the distance of $x$ to the nearest integer. 
Moreover, for any positive integer $h$, applying  Lemma~\ref{lem:invairant} 
we obtain that 
uniformly over $h$, we have 
$$
\int_{0}^{1}  \min\left \{\frac{1}{\|2hx\|}, N\right \} dx = \int_{0}^{1}  \min\left \{\frac{1}{\|x\|}, N\right \} dx \ll \log N,
$$
hence 
$$
 \cM_{2,2} (N)\ll N\log N.
$$
Note that this is tight except, possibly, for  the logarithm factor.

For $d=3$ we use that by~\cite[Proposition~8.2]{IwKow}:
\begin{align*}
\sup_{y,z\in [0,1]} \left|\sum_{n=1}^{N}\e(xn^3+yn^2 + zn)\right|^4&\ll 
N \sum_{g=-N}^{N}\sum_{h=-N}^{N}\min\left \{\frac{1}{\|6ghx\|}, N\right \}\\
& \ll N^3 +
N \sum_{\substack{g,h=-N\\gh\ne 0}}^{N}\min\left \{\frac{1}{\|6ghx\|}, N\right \},
\end{align*}
and arguing as before we obtain 
$$
 \cM_{3,4} (N)\ll N^3\log N.
$$

It is natural to ask whether similar arguments can improve Theorem~\ref{thm:general} for higher degrees. 
Note that  typically methods based on the Vinogradov mean value  theorem yield better bounds for higher degrees. 
Indeed this also happens here, which means  that Theorem~\ref{thm:general} gives  stronger 
bounds than using~\cite[Proposition~8.2]{IwKow} for $d\ge 4$ and applying the above arguments. 

\subsection{Open questions}
It is interesting to try to use the ``self-improving'' idea of~\cite{ChSh-New} and in particular~\cite[Corollary~3.9]{ChSh-New}
to obtain stronger results in the case when one of the polynomials $\varphi_{k+1} \ldots, \varphi_d$
is linear. There are however some obstacles which the authors have not been able to overcome.

It is also natural to study the mixed mean values
$$
M_{k, \rho, \tau } (\va, \bphi, N) = \int_{\vx \in [0,1]^k}  \(\int_{\vy \in [0,1]^{d-k}}
\left|T_{\va, \bphi}( \vx, \vy; N)  \right|^\tau d \vy\)^\rho d\vx, 
$$
and obtain   bounds which are stronger than those following from the trivial inequality
$$
M_{k, \rho, \tau } (\va, \bphi, N) \le M_{k, \rho\tau } (\va, \bphi, N) 
$$
and Theorem~\ref{thm:general}. The case of the classical Weyl sums is of special interest.

\section*{Acknowledgement}

This work was  supported   by ARC Grant~DP170100786.

\end{document}